  \theoremstyle{plain}
    \newtheorem{thm}{Theorem}[section]
    \newtheorem{prop}[thm]{Proposition}
   \newtheorem{lemma}[thm]{Lemma}
    \newtheorem{corollary}[thm]{Corollary}
    \newtheorem{subsec}[thm]{}
\theoremstyle{definition}
    \newtheorem{defn}[thm]{Definition}
\theoremstyle{remark}
\newenvironment{myeq}[1][]
{\stepcounter{thm}\begin{equation}\tag{\thethm}{#1}}
{\end{equation}}
\renewcommand{\Im}{\operatorname{Im}}
\renewcommand{\ker}{\operatorname{Ker}}
\newcommand{\Hom}{\operatorname{Hom}}
\newcommand{\Aut}{\operatorname{Aut}}
\newcommand{\Id}{\operatorname{Id}}
\newcommand{\pr}{\operatorname{pr}}
\newcommand{\rel}{\operatorname{rel}}
\title{}
\author{}
\date{}
\begin{document}
\title{Finite group actions on Kan complexes}
 \author{Goutam Mukherjee}
\email{goutam@isical.ac.in}
\address{Department of Mathematics,
 Indian Statistical Institute, Kolkata 700108,
West Bengal, India.}

 \author{Swagata Sarkar}
\email{swagatasar@gmail.com}
\address{Department of Mathematics,
 Indian Statistical Institute, Kolkata 700108,
West Bengal, India.}

 \author{Debasis Sen}
\email{sen\_deba@math.haifa.ac.il}
\address{Department of Mathematics,
 University of Haifa,
 31905 Haifa, Israel.}

\date{\today}
% \classification{Primary: 55N25, 55N91, 55P42;\ Secondary: 55P91, 55Q91, 55T99}
\subjclass[2010]{Primary: 55N25, 55N91, 55P42;\ Secondary: 55P91, 55Q91, 55T99}
\keywords{Kan complexes, Covering spaces, group actions}

\thispagestyle{empty}

\noindent
\begin{abstract} We study simplicial action of groups on one vertex Kan complexes. We show that  every semi-direct product of the fundamental group of
an one vertex Kan complex with a finite group can be simplicially realized. We also calculate the cohomology of the fixed point set of a finite $p-$group action 
on an one vertex aspherical Kan complex.
\end{abstract}
\maketitle
\noindent
\section{Introduction}

Aspherical Kan complexes are Kan complexes all whose higher homotopy groups are trivial. This concept is a generalization of the concept of $K(\pi,1)$ spaces in the simplicial context. It may be mentioned that $K(\pi,1)$ spaces play an important role in topology, geometry and group theory.

In Section \ref{s1} we give the basic definitions and properties of one vertex Kan complexes. 
The techniques we have used for studying the group actions here are mostly generalizations of techniques developed by Conner and Raymond to study group actions on aspherical manifolds in the papers \cite{CR1, CR2, CR3,CR4,CR5}.
One of the techniques involves lifting the group action to the universal cover. This is the main reason we introduce the notion of one vertex Kan complexes. Note that the geometric realization of an one vertex Kan complex is a connected CW complex.

In Section \ref{s2}, we start with the definition of a regular covering space of a one vertex Kan complex. There is not enough exposition of regular covering of Kan complexes available in literature, in contrast to universal covering. 
Therefore, keeping the general situation in mind, wherever possible, we have proved the results on regular covers, instead of the universal cover. 

In the next section, Section \ref{s3}, we study the lifting problem of a given action to a regular cover. Note that all actions we consider are simplicial.

Now, let $(K, \phi)$ be a one vertex Kan complex with only one $0$-simplex $\phi$ and fundamental group $\pi:=\pi_{1}(K, \phi)$. Let $G$ be a discrete group acting effectively on
$(K, \phi)$. Note that, in case of a simplicial action on an one vertex Kan complex, the fixed point set is always non-empty. 
Hence, we have a homomorphism $\varphi \colon G \rightarrow \Aut(\pi)$  given by $g \cdot [x] := [g \cdot x]$, where $g \in G$ and $[x] \in \pi$. This homomorphism is called the abstract kernel. 
In Theorem \ref{thm42}, we show that such an abstract kernel, $\varphi \colon G \rightarrow \Aut(\pi)$, arising out of a discrete group action on an one vertex Kan complex,
gives rise to a group extension
\begin{displaymath}
 1 \longrightarrow \pi  \longrightarrow L \longrightarrow G \longrightarrow 1.
\end{displaymath}

\noindent
This group extension, $L$, can be simplicially realized as a group of morphisms of the universal cover, $\widetilde{K}$, of $K$, in such a way that the action $(L, \widetilde{K})$
covers the action $(G, K)$. Moreover, there is a natural isomorphism between the isotropy groups $G_{p(x)}$ and $L_{x}$, where $x \in \widetilde{K}$ and $p$ is the covering map.

Conversely, we prove that given an extension, $L$, of the fundamental group $\pi$, of a one vertex Kan complex, by a {\em finite} group $G$:
\begin{displaymath}
 1 \longrightarrow \pi  \longrightarrow L \longrightarrow G \longrightarrow 1
\end{displaymath}
such that the extension $L$ is a semi-direct product of $\pi$ and $G$, there exists an one vertex Kan complex $Y$, such that $L$ acts on $Y$ - simplicially
realizing the extension (cf. Theorem \ref{thm43}).

\noindent
In the last section, we calculate the cohomology of the fixed point set of a finite $p-$group action on a one vertex, aspherical Kan complex, and, as a by-product, get a variant of Borel's Theorem \cite[Theorem 3.1]{CR2} 
for minimal, aspherical, one vertex Kan complexes.

\noindent
\section{Preliminaries}\label{s1}

In this section we set up notation and review various definitions and properties of simplicial objects (\cite{May}, \cite{GJ}). We begin with the definition of a simplicial set.

\begin{defn}
A simplicial set $X$ is a sequence $\{X_n\}_{n\geq 0}$ of sets, together with set-maps $ \partial_i \colon  X_n\rightarrow X_{n-1}$ and $s_i\colon X_n\rightarrow X_{n+1},$ $0\leq i \leq n,$
satisfying the following simplicial identities,
$$\partial_i \partial_j = \partial_{j-1} \partial_i,~ ~  \partial_i s_j = s_{j-1} \partial_i,~ \mbox{if}~~i<j,$$
$$~~\partial_j s_j =id = \partial_{j+1}s_j,$$
$$\partial_i s_j = s_j \partial_{i-1}, ~~i >j+1;~~ s_is_j = s_{j+1} s_i,~~i\leq j.$$
A simplicial map $f\colon X\rightarrow Y$ between two simplicial sets is a collection of maps $f_n \colon  X_n\rightarrow Y_n,$ $n\geq 0,$  commuting with $\partial_i$ and $s_i$.
\end{defn}

For a simplicial set $X$, elements of $X_n$ are called $n$-simplices. A simplex $x\in X_n$ is called degenerate if $x=s_ix^{\prime}$ for some $x^{\prime}\in X_{n-1}$, $0\leq i\leq n-1$. Otherwise $x\in X_n$ is called non-degenerate.
A simplicial set is said to be finite if all simplices above a fixed dimension are degenerate.

The simplicial set $\Delta[n]$, $n\geq 0$, is defined as follows. The set of $q$-simplices is
$$\Delta [n]_q = \{(a_0,a_1,\cdots,a_q) ;~~\mbox{where}~ a_i\in \mathbb{Z},~ 0\leq a_0\leq a_1\leq\cdots\leq a_q\leq n\}.$$
The face and degeneracy maps are defined by $$\partial_i(a_0,\cdots,a_q)=(a_0,\cdots,a_{i-1},a_{i+1},\cdots,a_q),$$ $$s_i(a_0,\cdots,a_q)=(a_0,\cdots,a_i,a_i,a_{i+1},\cdots,a_q).$$

We have simplicial maps $$\delta_i\colon \Delta[n-1]\rightarrow \Delta[n],~~\sigma_i\colon \Delta[n+1]\rightarrow \Delta[n],~~0\leq i \leq n,$$ defined by $\delta_i(\Delta_{n-1})=\partial_i(\Delta_n)$ and $\sigma_i(\Delta_{n+1})=s_i(\Delta_n)$.
\begin{defn}
 The boundary subcomplex $\partial \Delta[n]$ of $\Delta[n]$ is defined as the smallest subcomplex of $\Delta[n]$ containing the faces $\partial_i\Delta_n,~~ i=0,1,\cdots,n $. The $k$-th horn $\Lambda_{k}^{n}$ of $\Delta[n]$ is the subcomplex of $\Delta[n]$ which is generated by all the faces $\partial_i\Delta_n$ except the $k$-th face $\partial_k\Delta_n$.
\end{defn}

\begin{defn}
A simplicial set $X$ is called a Kan complex if for every collection of $(n+1)$-tuple of $n$-simplices $(x_0,\cdots,x_{k-1},\hat{x}_k,x_{k+1},\cdots,x_{n+1})$  satisfying the compatibility conditions $\partial_i x_j=\partial_{j-1}x_i$, $i<j,~i\neq k,~j\neq k$, there exists an $(n+1)$-simplex $x$ such that $\partial_i x=x_i,~i\neq k$.
\end{defn}
The defining condition of a Kan complex is equivalent to the following statement. Any simplicial map from the $k$-th horn $\Lambda_k^{n+1}$ to $X$ can be extended to $\Delta[n+1]$, where $n\geq 0$ and $0\leq k\leq n$.

\begin{defn}
 A Kan complex $X$ is said to be minimal if $\partial_i x=\partial_i y,~i\neq k$, implies $\partial_k x=\partial_k y$.
\end{defn}

Next we briefly recall the definitions of homology and cohomology of a simplicial set. For a simplicial set $X$, let $C_n(X)$ denote the quotient of the free abelian group generated by the $n$-simplices of $X$ by the subgroup generated by the degenerate $n$-simplices.
Define $d\colon C_n(X)\rightarrow C_{n-1}(X)$ by $d=\sum_{i=0}^n (-1)^i \partial_i.$ Then $\{C_*(X),d\}$ becomes a chain complex, called the normalized chain complex of $X$. Given an abelian group $A$, the normalized cochain complex $\{C^*(X;A),\delta\}$ is defined
by $C^n(X;A)=\Hom_{\mathcal{A}b}(C_n(X),A)$ with differential $\delta\colon C^n(X;A)\rightarrow C^{n+1}(X;A)$, given by $\delta f=(-1)^{n+1} f\circ d,~f\in C^n(X;A)$, where $\mathcal{A}b$ denote the category of abelian groups and group homomorphisms.
Then the homology and cohomology groups of $X$ with coefficients in $A$ are defined by $H_n(X;A):= H_n(C_*(X)\otimes A,d\otimes id)$, and, $H^n(X;A):=H^n(C^*(X;A),\delta)$, respectively.

\begin{defn}
 The cartesian product $X\times Y$ of two simplicial sets $X$ and $Y$ is defined by $(X\times Y)_n=X_n\times Y_n$ with the face and degeneracy maps given by $$\partial_i(x,y)=(\partial_ix,\partial_iy)~ \mbox{and}~s_i(x,y)=(s_i x,s_i y).$$
\end{defn}

We define the concept of homotopy on simplicial sets:

\begin{defn}
 Let $f,g\colon X\rightarrow Y$ be simplicial maps. Then $f$ is said to be homotopic to $g$, written as $f\simeq g$, if there is a simplicial map $\mathcal{H}\colon X\times \Delta[1]\rightarrow Y$ such that $$\mathcal{H}\circ (id \times \delta_1)=f,~ \mathcal{H}\circ (id\times \delta_0)=g,$$ where we identity $X\times \Delta[0]$ with $X$ and $\delta_0,\delta_1\colon \Delta[0]\rightarrow \Delta[1]$ are the simplicial maps as defined earlier.

Suppose that $X^{\prime}$ and $Y^{\prime}$ are subcomplexes of $X$ and $Y$ respectively such that $f,g$ take $X^{\prime}$ into $Y^{\prime}$. If $f|_{X^{\prime}}=g|_{X^{\prime}}$ (=$\alpha$, say) then a homotopy $\mathcal{H}\colon f\simeq g$ is called a relative homotopy if $\mathcal{H}\circ (i\times id) =\alpha \circ \pr_1,$ where $\pr_1 \colon X^{\prime}\times \Delta[1] \rightarrow X^{\prime}$ is the projection onto the first factor and $i\colon X^{\prime}\hookrightarrow X$ is the inclusion. In this case we write $f\simeq g~(\rel~X^{\prime}).$ Intuitively, the homotopy leaves the restrictions of $f$ to $X^{\prime}$ unchanged.
\end{defn}

Note that the homotopy relation may in general fail to be an equivalence relation on the set $\Hom_{\mathcal{S}}(X,Y)$. But, homotopy is an equivalence relation on $\Hom_{\mathcal{S}}(X,Y)$ if $Y$ is a Kan complex \cite{May}. Thus we have the notions of homotopy equivalence, contractibility, etc., of simplicial sets. 

Let $X$ be a simplicial set and $\phi \in X_0.$ Then $\phi$ generates a subcomplex of $X$ which has exactly one simplex $s_{n-1}\cdots s_0(\phi)$ in dimension $n$. We will write $\phi$ unambiguously to denote either this subcomplex or any of its simplices.

\begin{defn}
 For a Kan complex $X$ and $\phi \in X_0$, define $$\pi_n(X,\phi):=\Hom_{\mathcal{S}}((\Delta[n],\partial\Delta[n]),(X,\phi))/\simeq (\rel ~\partial\Delta[n]),~n\geq 0.$$
\end{defn}

In general $\pi_0(X,\phi)$ is just a set. For $n\geq 1$, $\pi_n(X,\phi)$ is a group and it is abelian for $n>1$. One calls $\pi_1(X,\phi)$ the fundamental group of $X$.

\begin{defn}
 Given a group $\pi$ and a non-negative integer $n$, a Kan complex $X$ is called an Eilenberg-Mac~Lane complex of type $(\pi,n)$ if $\pi_n(X,\phi)=\pi$ and $\pi_i(X,\phi)=0$ for $i\neq n$. Such a complex is called a $K(\pi,n)$-complex if it is minimal.
\end{defn}

Observe that $\pi$ has to be abelian if $n> 1$. It is well known that any two $K(\pi,n)$ complexes are isomorphic and $K(\pi,n)_n=\pi$.

Note that the simplicial group $\pi_q=\pi,q\geq 0$, with all face and degeneracy maps the identity, is a $K(\pi,0)$-complex.

\begin{defn}
An one vertex Kan complex $(K,\phi)$ is said to be \emph{aspherical} if it is an Eilenberg-Mac~Lane complex of type $(\pi, 1)$, or equivalently, if it's universal cover is contractible.
\end{defn}

The Eilenberg-Mac~Lane complexes classify simplicial cohomology in the following sense.
\begin{thm}[\cite{May}]
 For a simplicial set $X$ and an abelian group $A$, there is natural bijection $$H^n(X;A)\leftrightarrow [X,K(A,n)].$$ Here $[X,K(\pi,n)]$ denotes the homotopy class of simplicial maps from $X$ to $K(A,n)$.
\end{thm}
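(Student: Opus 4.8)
The plan is to work with a convenient model of $K(A,n)$ and to identify both $H^n(X;A)$ and $[X,K(A,n)]$ with the normalized cocycle group $Z^n(X;A)$ modulo an appropriate equivalence. Concretely, realize $K(A,n)$ as the simplicial abelian group whose group of $q$-simplices is $Z^n(\Delta[q];A)$, the normalized $n$-cocycles on the standard $q$-simplex, with faces and degeneracies induced by the maps $\delta_i,\sigma_i$ introduced in Section \ref{s1}. One first checks that this is a minimal Kan complex which is an Eilenberg--Mac~Lane complex of type $(A,n)$: minimality and the Kan extension condition are verified directly from the cocycle description (extending a cocycle on a horn $\Lambda_k^{m}$ of a simplex to all of $\Delta[m]$, and the rigidity that makes distinct simplices distinguishable), while the computation $\pi_n\cong A$, $\pi_i=0$ for $i\neq n$, reduces to the fact that $Z^n(\Delta[m]/\partial\Delta[m];A)$ computes the reduced cohomology of the simplicial sphere. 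By the uniqueness of $K(A,n)$ mentioned in the excerpt, this is the complex in the statement.

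Next I would establish a natural isomorphism $\Hom_{\mathcal{S}}(X,K(A,n))\cong Z^n(X;A)$. By the Yoneda lemma, $\Hom_{\mathcal{S}}(\Delta[q],K(A,n))=K(A,n)_q=Z^n(\Delta[q];A)$, naturally in $[q]$. Now write $X$ as the colimit of its simplices, $X\cong\colim_{\Delta[q]\to X}\Delta[q]$. Since the functor $C_n(-)$ sends colimits of simplicial sets to colimits of abelian groups, the functor $Z^n(-;A)=\ker\bigl(\delta\colon C^n(-;A)\to C^{n+1}(-;A)\bigr)$ sends such colimits to limits; and $\Hom_{\mathcal{S}}(-,K(A,n))$ likewise sends colimits to limits. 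Two colimit-to-limit functors agreeing on the representables $\Delta[q]$ agree everywhere, so $\Hom_{\mathcal{S}}(X,K(A,n))\cong Z^n(X;A)$ naturally in $X$; explicitly, a simplicial map $f$ corresponds to the cocycle $z_f$ obtained by evaluating $f$ on simplices.

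The third step is to show that $f\simeq g$ if and only if $z_f-z_g\in\operatorname{im}\delta$. For the ``if'' direction, given $z_g-z_f=\delta b$ with $b\in C^{n-1}(X;A)$, I would construct the homotopy $\mathcal{H}\colon X\times\Delta[1]\to K(A,n)$ by writing down the corresponding cocycle $w\in Z^n(X\times\Delta[1];A)$ via the Eilenberg--Zilber decomposition $C_*(X\times\Delta[1])\simeq C_*(X)\otimes C_*(\Delta[1])$: using that $C_*(\Delta[1])$ has generators $[0],[1]$ in degree $0$ and $[01]$ in degree $1$ with $d[01]=[1]-[0]$, set $w$ to be $z_f$ on $C_n(X)\otimes[0]$, to be $z_g$ on $C_n(X)\otimes[1]$, and to be $b$ (up to sign) on $C_{n-1}(X)\otimes[01]$. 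The identity $\delta b=z_g-z_f$ is exactly what forces $\delta w=0$, and restricting $w$ along $id\times\delta_1$ and $id\times\delta_0$ recovers $z_f$ and $z_g$, so $\mathcal{H}$ is the required homotopy. For the ``only if'' direction one runs the same computation in reverse: a homotopy $\mathcal{H}$ yields a cocycle on $X\times\Delta[1]$ restricting to $z_f,z_g$ at the two ends, and the component of this cocycle along $C_{n-1}(X)\otimes[01]$ is a cochain $b$ with $\delta b=\pm(z_g-z_f)$. Since $K(A,n)$ is a Kan complex, homotopy is an equivalence relation on $\Hom_{\mathcal{S}}(X,K(A,n))$, so passing to quotients gives the desired natural bijection $[X,K(A,n)]\cong Z^n(X;A)/\operatorname{im}\delta=H^n(X;A)$; naturality in $A$ follows from the functoriality of the whole construction.

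I expect the main obstacle to be the third step: producing the homotopy explicitly and, in particular, bookkeeping the signs introduced by the Eilenberg--Zilber (shuffle/Alexander--Whitney) maps and by the normalization conventions, so that ``$\delta w=0$'' is genuinely equivalent to ``$\delta b=z_g-z_f$''. An alternative that repackages these signs more cleanly is to invoke the Dold--Kan correspondence: applying the free simplicial abelian group functor reduces $\Hom_{\mathcal{S}}(X,K(A,n))$ to chain maps $C_*(X)\to A[n]$, where $A[n]$ is the complex with $A$ concentrated in degree $n$, hence to $Z^n(X;A)$, and simplicial homotopies become chain homotopies, hence the coboundary relation; but even there one must compare homotopies through $X\times\Delta[1]$ with chain homotopies through $C_*(X)\otimes C_*(\Delta[1])$, which is once more an Eilenberg--Zilber argument.
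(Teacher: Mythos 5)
The paper does not prove this statement at all: it is quoted as background from the cited reference (May, \emph{Simplicial objects in algebraic topology}), so there is no in-paper argument to compare against. Your outline is correct and is essentially the standard proof from that source: take $K(A,n)_q=Z^n(\Delta[q];A)$, identify $\Hom_{\mathcal{S}}(X,K(A,n))\cong Z^n(X;A)$ naturally (by Yoneda plus writing $X$ as a colimit of its simplices, or just by evaluating a map on simplices), and then show that homotopic maps correspond to cohomologous cocycles. The only place you diverge from the classical treatment is the third step: you route the homotopy through the Eilenberg--Zilber equivalence $C_*(X\times\Delta[1])\simeq C_*(X)\otimes C_*(\Delta[1])$, which works but is exactly where the sign bookkeeping you flag lives; the cited proof instead exploits the fact that $K(A,n)$ is a simplicial abelian group, so a simplicial homotopy $X\times\Delta[1]\to K(A,n)$ can be rewritten combinatorially (maps $h_i\colon X_q\to K(A,n)_{q+1}$) and normalizes directly to a chain homotopy, i.e.\ to a cochain $b$ with $\delta b=z_g-z_f$, with no shuffle or Alexander--Whitney maps needed --- this is the cleaner version of the Dold--Kan alternative you mention, and adopting it would close the one loose end in your sketch.
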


\noindent
\section{Regular Cover of a Kan Complex}\label{s2}

In this section we introduce the notion of regular cover of a one vertex Kan complex, generalizing the universal cover \cite{gugg}. We begin with the definition of a covering map on a Kan complex.
\begin{defn}
A simplicial map between two complexes $p \colon E \rightarrow B$ is called a \emph{Kan fibration} if for every collection of $n+1$ compatible $n$-simplices $x_0 ,\cdots, \hat{x}_{k} ,\cdots, x_{n+1}$ of $E$, i.e., $\partial_{i} x_{j} = \partial_{j-1}
x_{i}$ for $i < j$ and $i,j \neq k$, which lie above the faces of some $(n+1)$-simplex $z$ of $B$, i.e., $\partial_{i} z = p(x_{i}) $ for $i \neq k$, there exists an $(n+1)$-simplex $y$ of $E$ such that $p(y) = z$ and $\partial_{i} y = x_{i} $ for $ i \neq k$.
If, in addition, we require the above simplex $y$ to be unique, then the map $p$ is called a \emph{covering map}.
\end{defn}

\noindent
In the above definition, $E$ is called the total complex, $B$ the base complex, and $F_{\phi } = p^{-1} (\phi_{B} )$ is called the fibre over $\phi_{B}$, where $\phi_{B}$ is the complex generated by a vertex $\phi \in B_{0}$. The triple $(E,p,B)$ is called a fibre space.

\noindent
Let $K$ be a one vertex Kan complex, with only one $0$-simplex $\phi$ and fundamental group $\pi = \pi_1 (K, \phi )$. Let $H \leq \pi$ be a normal subgroup of $\pi$. 
Then the Kan complex which is the covering complex of $K$ corresponding to $H$ is described as follows :

\begin{defn}
Define a complex $\widetilde{K}_H$ by ${(\widetilde{K}_H)}_{n} = K_{n} \times \pi / H$ with face and degeneracy operators defined by:
\begin{itemize}
\item $\partial_i (x, \overline{\alpha})$ = $(\partial_i x,  \overline{\alpha})$, $0\leq i < n,$
       $\partial_n(x,\overline{\alpha}) =( \partial_n x , [ {\partial_{0}}^{n-1}x]^{-1}\overline{\alpha} )$,
\item  $s_i (x,\overline{\alpha})$ = $(s_i x, \overline{\alpha})$, $0\leq i\leq n,$
\end{itemize}
where $x\in K_n$ and $ \overline{\alpha}$ denotes the coset in $\pi/H$ represented by $\alpha\in \pi$.
\end{defn}
\noindent
\begin{prop}
With notations as above, $\widetilde{K}_H$ is a Kan complex
with fundamental group $H$ and the projection $p\colon \tilde{K}_H\rightarrow K,~p(x,\overline{\alpha})=x,$ is a covering map.
\end{prop}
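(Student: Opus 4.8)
The plan is to establish, with base vertex $\widetilde{\phi}=(\phi,\overline{e})\in(\widetilde{K}_H)_0$, three things in turn: that $\widetilde{K}_H$ is a simplicial set, that $p$ is a covering map (whence $\widetilde{K}_H$ is automatically a Kan complex), and that $\pi_1(\widetilde{K}_H,\widetilde{\phi})\cong H$. Conceptually $\widetilde{K}_H$ is the twisted Cartesian product $K\times_\tau(\pi/H)$ with twisting function $\tau(x)=\overline{[\partial_0^{\,n-1}x]}$ for $x\in K_n$; this is meaningful because $K$ has a single vertex, so $\partial_0^{\,n-1}x\in K_1$ is a loop at $\phi$ and thus determines a class in $\pi$, and because $H$ is normal in $\pi$ the corresponding coset is well defined in $\pi/H$ (left translation by $\pi$ being well defined on $\pi/H$), so the Proposition is also an instance of standard results on such products over a discrete fibre (\cite{GJ}); I sketch the direct argument. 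Everything below rests on two facts about the loop $\partial_0^{\,n-1}x$, $x\in K_n$:
\begin{itemize}
\item[(a)] $\partial_0^{\,n-2}\partial_i x=\partial_0^{\,n-1}x$ for $0\le i\le n-2$ (geometrically: the last edge of $\partial_i x$ is the last edge of $x$), which drops out of $\partial_0\partial_j=\partial_{j-1}\partial_0$ applied until every index collapses to $0$, together with the analogous $\partial_0^{\,n}s_j x=\partial_0^{\,n-1}x$ and $\partial_0^{\,n}s_n x=s_0\phi$ for the degeneracies;
\item[(b)] $[\partial_0^{\,n-1}x]\,[\partial_0^{\,n-2}\partial_n x]=[\partial_0^{\,n-2}\partial_{n-1}x]$ in $\pi$.
\end{itemize}
Identity (b) is the genuinely nontrivial point and the step I expect to be the main obstacle: for $n=2$ it says that a $2$-simplex $y$ of a one-vertex Kan complex witnesses the relation $[\partial_0 y]\,[\partial_2 y]=[\partial_1 y]$ in $\pi_1(K,\phi)$ --- equivalently, that $\pi_1$ of such a complex is presented by its edges modulo its triangles --- which I would either quote or extract from the definition of $\pi_n$ via maps $(\Delta[n],\partial\Delta[n])\to(K,\phi)$; the general case then follows by pushing $x$ down along $\partial_0$ to the $2$-simplex $\partial_0^{\,n-3}x$ and using $\partial_0^{\,k}\partial_m=\partial_{m-k}\partial_0^{\,k}$ to identify the three relevant edges.

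Granting (a) and (b), the rest is bookkeeping. For simpliciality I would run through the simplicial identities on $\widetilde{K}_H$: the degeneracies and the faces $\partial_i$ with $i<n$ leave the coset coordinate untouched, so every identity not involving the twisted last face reduces verbatim to its counterpart in $K$; the remaining identities (essentially $\partial_{n-1}\partial_n=\partial_{n-1}\partial_{n-1}$ and the mixed face--degeneracy ones) reduce, after cancelling the $K$-coordinates, exactly to (a) and (b) read in $\pi/H$. For the covering property, let $z\in K_{n+1}$ and let $(x_i)_{i\neq k}$ be a compatible family of $n$-simplices of $\widetilde{K}_H$ with $p(x_i)=\partial_i z$, say $x_i=(\partial_i z,\overline{\alpha_i})$; any lift must be of the form $y=(z,\overline{\beta})$, and the equations $\partial_i y=x_i$ force $\overline{\beta}=\overline{\alpha_i}$ for $i<n+1$ and $\overline{\beta}=[\partial_0^{\,n}z]\,\overline{\alpha_{n+1}}$ for $i=n+1$. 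Thus $\overline{\beta}$ is uniquely pinned down, which gives uniqueness of $y$; and the fact that this single $\overline{\beta}$ satisfies all these equations at once is precisely what the compatibility relations $\partial_i x_j=\partial_{j-1}x_i$ yield, once more via (a) and (b) now applied to $z$. Hence $p$ is a covering map, in particular a Kan fibration, and a horn $\Lambda_k^{n+1}\to\widetilde{K}_H$ is filled by extending its $p$-image to a simplex of $K$ (possible since $K$ is Kan) and lifting; so $\widetilde{K}_H$ is a Kan complex.

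Finally, for the fundamental group, the fibre $p^{-1}(\phi)=\{\phi\}\times(\pi/H)$ is a discrete simplicial set, and $\widetilde{K}_H$ is connected since each vertex $(\phi,\overline{\gamma})$ is joined to $\widetilde{\phi}$ by an edge of the form $(x,\overline{e})$ with $[x]$ in the relevant coset. Feeding the Kan fibration $p$ into the long exact homotopy sequence gives $\pi_1(p^{-1}(\phi))\to\pi_1(\widetilde{K}_H,\widetilde{\phi})\to\pi\to\pi_0(p^{-1}(\phi))$ with $\pi_1(p^{-1}(\phi))=0$ and last arrow the quotient $\pi\to\pi/H$, whence $\pi_1(\widetilde{K}_H,\widetilde{\phi})\cong\ker(\pi\to\pi/H)=H$. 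Alternatively one checks this by hand: a $1$-simplex is a loop at $\widetilde{\phi}$ precisely when it has the form $(x,\overline{e})$ with $[x]\in H$; the lifting property of the covering map shows $p_*\colon\pi_1(\widetilde{K}_H,\widetilde{\phi})\to\pi$ is injective; and its image is all of $H$ because every $h\in H$ equals $[x]$ for some $x\in K_1$, and $(x,\overline{e})$ lifts a loop representing $h$.
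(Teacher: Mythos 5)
Your proposal is correct, and it shares the two essential ingredients of the paper's proof: the relation $[\partial_1 z]=[\partial_0 z][\partial_2 z]$ in $\pi$ for a $2$-simplex $z$ of a one-vertex Kan complex (your identity (b), which the paper likewise invokes without proof), and the long exact homotopy sequence of the fibration with discrete fibre, identifying the boundary map with the quotient $\pi\to\pi/H$ so that $\pi_1(\widetilde{K}_H,\tilde{\phi})=\ker(\pi\to\pi/H)=H$. Where you genuinely differ is in the routing of the simplicial verifications: the paper proves the Kan extension condition for $\widetilde{K}_H$ directly, by a case analysis on the missing face ($k=n+1$, $k=n$, $k<n$), and then merely asserts that ``similar computations'' give the covering property; you prove the covering property first --- uniqueness being immediate since the coset coordinate of any lift of $z$ is pinned down by the untwisted faces or by $[\partial_0^{\,n}z]\,\overline{\alpha}_{n+1}$, and existence being exactly the consistency of these constraints, which your (a) and (b) supply --- and you then obtain the Kan property of $\widetilde{K}_H$ formally by filling the horn in $K$ and lifting. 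Your route is the more economical: it makes the uniqueness statement (left implicit in the paper) transparent and derives Kan-ness instead of re-verifying it, at the cost of using that the base $K$ is Kan, which the paper's self-contained case analysis does not need at that stage. The observation that $\widetilde{K}_H$ is a twisted Cartesian product $K\times_\tau(\pi/H)$ with $\tau(x)=\overline{[\partial_0^{\,n-1}x]}$ is a nice conceptual framing absent from the paper. One small slip: to reduce (b) to the two-dimensional case you should push $x\in K_n$ down to the $2$-simplex $\partial_0^{\,n-2}x$, not $\partial_0^{\,n-3}x$ (the paper's choice $z=\partial_0^{\,n-1}y$ for $y\in K_{n+1}$ is the same simplex); and, like the paper, you should at some point verify by an explicit lift of a loop that the connecting map of the exact sequence really is the projection $\pi\to\pi/H$, which your ``by hand'' alternative in effect does.
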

\begin{proof}
Since $K$ is a Kan complex, most of the simplicial identities for $\widetilde{K}_H$ follow trivially from the simplicial identities for $K$,
 and it suffices to verify that $\partial_{n+1} s_n = \Id_{(\widetilde{K}_H)_{n}}$.
We take $(x, \overline{\alpha} ) \in (\widetilde{K}_H)_{n}$ and since we know that $\partial_{0}^{n} x = \phi$ we have, $\partial_{n+1} s_n (x, \overline{\alpha})$ = $(\partial_{n+1} s_{n} x, [\partial_{0}^{n} s_n x]^{-1} * \overline{\alpha})$
= $(x, [s_{0} \partial_{0}^{n}x]^{-1}  * \overline{\alpha} )$ = $(x, \overline{\alpha} )$, and hence we are done.

\noindent
Next we prove that $\widetilde{K}_H$ is also a Kan complex. 

\noindent
Consider a $(n+1)$-tuple  $(\tilde{x}_{0} ,\cdots , \tilde{x}_{k-1} , \tilde{x}_{k+1} ,\cdots \tilde{x}_{n+1})$ of compatible $n$-simplices in $\widetilde{K}_H$, 
where $\tilde{x}_i = (x_i, \overline{\alpha}_i)\in (\widetilde{X}_H)_n$.
Then we have, $\partial_{i} \tilde{x}_{j} = \partial_{j-1} \tilde{x}_{i}$ for $i < j$ and $i,j \neq k$. 
Therefore the $(n+1)$-tuple of $n$-simplices in $K$ $(x_{0},\cdots, x_{k-1}, x_{k+1},\cdots, x_{n+1})$ is compatible and hence, there exists an extension $y \in K_{n+1}$.
To check the compatibility of the second entry we consider the following cases:
\noindent
For $k = n+1$, taking $i,j \leq n$ and $i = j-1$ and applying the condition $\partial_{i} \tilde{x}_{j} = \partial_{j} \tilde{x}_{i}$, we get $\overline{\alpha}_{j-1}  =\overline{\alpha} _{j}  = \overline{\alpha}_{0} $ (say) for all $j \leq n$. Hence, $\tilde{y} = (y, \overline{\alpha}_{0} )$ is the required extension.

\noindent
For $k=n$, we take $j=n+1$ and $i<j-1$. Then the compatibility conditions give $\overline{\alpha}_{n+1}=[\partial_n^{n-1}x_i]^{-1}\overline{\alpha}_i $. Further, applying compatibility conditions for $i<j<n$, we get $\overline{\alpha}_{0} = \overline{\alpha}_{1}  =\cdots= \overline{\alpha}_{n-1}$. Then $\tilde{y}=(y,\overline{\alpha}_0)$ is the required extension.
To see this, it suffices to check that $\partial_{n+1}\tilde{y}=(x_{n+1},\overline{\alpha}_{n+1})$. Now, $\partial_{n+1}\tilde{y}=(\partial_{n+1}y,[\partial_0^n y]^{-1}\overline{\alpha}_0)= (x_{n+1},[\partial_0^{n-1}x_0]^{-1}\overline{\alpha}_0 )=(x_{n+1},\overline{\alpha}_{n+1}).$
For the case $k < n$, we have $[\partial_0^{n+1}x_{n+1}]^{-1}\overline{\alpha}_{n+1}=[\partial_0^{n-1}x_n]^{-1}\overline{\alpha}_n $, using the compatibility conditions for $i=n,j=n+1$. Moreover, if $i<j\leq n$, then we get $\overline{\alpha}_0 =\cdots=\overline{\alpha}_{k-1}=\overline{\alpha}_{k+1}=\cdots =\overline{\alpha}_n$.
Therefore, we take $\tilde{y}=(y,\overline{\alpha}_n)$. To prove that it is the desired extension, it suffices to check that $\partial_{n+1}\tilde{y}=(x_{n+1},\overline{\alpha}_{n+1})$, i.e $[\partial_0^n y]^{-1}\overline{\alpha}_n =\overline{\alpha}_{n+1}$. But we know that
$\overline{\alpha}_{n+1}=[\partial_0^{n+1}x_{n+1}][\partial_0^{n-1}x_n]^{-1}\overline{\alpha}_n = [\partial_0^{n-1}\partial_{n+1}y][\partial_0^{n-1}\partial_n y]^{-1}\overline{\alpha}_n $. Applying simplicial identities, we have $\partial_0^{n-1}\partial_{n+1}y=\partial_2\partial_0^{n-1}y$ and
$\partial_0^{n-1}\partial_n y=\partial_1\partial_0^{n-1}y.$ Therefore, $[\partial_1 z]=[\partial_0 z][\partial_2 z]$ in $\pi$, where $z=\partial_0^{n-1}y$. Hence, $\overline{\alpha}_{n+1}=[\partial_2z][\partial_1 z]^{-1}\overline{\alpha}_n =[\partial_0 z]^{-1}\overline{\alpha}_n =[\partial_0^ny]^{-1}\overline{\alpha}_n $.

With similar computations as above shows that the projection map $p$ is a covering map.

Finally, we calculate the fundamental group of $\widetilde{K}_H$. Consider the homotopy exact sequence of the Kan fibration $p\colon \widetilde{K}_H \rightarrow K $.
Let $F$ be the fibre over $\phi$ and let $\tilde{\phi} = (\phi , \overline{e})$ be the point in the fibre which corresponds to the identity $\overline{e}$ of $\pi / H$. This fibre $F$ satisfies $\pi_{n} (F, \tilde{\phi}) = 0$ for $n > 0$.
Now, if there exists a $1$-simplex from the $0$-simplex $(\phi , \overline{\alpha})\in F_0$ to the $0$-simplex $(\phi ,\overline{\beta} )\in F_0$ of the fibre $F$ over $\phi$, then there exists an element $ \overline{\gamma} \in \pi /H$ such that $\partial_{1} (s_{0} \phi , \overline{\gamma}) = (\phi ,\overline{\alpha} )$
and $\partial_{0} (s_{0} \phi , \overline{\gamma}) = (\phi , \overline{\beta})$. But, $\partial_{0} (s_{0} \phi , \overline{\gamma}) = (\phi , \overline{\gamma}) = \partial_{1} (s_{0} \phi , \overline{\gamma})$. Therefore, $\overline{\alpha} = \overline{\beta}$ and this implies that $\pi_{0} (F, \tilde{\phi}) = \pi /H$.
Consider the boundary map $\partial_{H} : \pi_{1} (K,\phi) \rightarrow \pi_{0} (F, \tilde{\phi})$ in the homotopy long exact sequence of the Kan fibration $p\colon\widetilde{X}_H\rightarrow X$.
If $\alpha = [x] \in \pi_{1} (K,\phi)$, where $x \in K_{1}$, then, if we choose $(x,\overline{\beta} )$ such that $\partial_{1} (x, \overline{\beta}) = \tilde{\phi}  $, we have $\partial_{1} (x, \overline{\beta}) = (\partial_{1} x, [x]^{-1} \overline{\beta}) = (\phi, \overline{\alpha}^{-1}\overline{\beta} ) =  (\phi, \overline{e})$.
Hence, $\partial_{H} (\alpha ) = [\partial_{0} (x,\overline{\beta} )] = (\partial_{0} x, \overline{\beta}) = (\phi ,\overline{\beta} ) = (\phi, \overline{\alpha})$ and therefore $\partial_H\colon \pi=\pi_1(X,\phi)\rightarrow \pi_0(F,\tilde{\phi})=\pi/H$ is just the projection map.
Using the homotopy exact sequence of the Kan fibration $p:\widetilde{K}_H \rightarrow K $, we get $\pi_{1} (\widetilde{K}_H , \tilde{\phi}) = \ker \partial_{H} = H$.
\end{proof}

\begin{defn}
The Kan complex $\widetilde{K}_{H}$, together with the map $p\colon \widetilde{K}_{H}\rightarrow K$, is called the \emph{regular cover} of the one vertex Kan complex $K$, corresponding to the normal subgroup $H$ of the fundamental group of $K$.
\end{defn}
The $0$-simplex $(\phi, \overline{e} )$ of $\widetilde{K}_{H}$ is denoted by $\tilde{\phi}$.
The group $\pi /H$ acts simplicially on the right of the regular cover $\widetilde{K}_H$ by multiplication on the second factor,
 \begin{myeq}\label{eq}
\psi\colon \pi/H\rightarrow\Aut{\widetilde{K}_H},~~(k, \overline{\alpha})\overline{\beta}:=(k,\overline{\alpha} \overline{\beta}),
\end{myeq}
 for all $(k,\overline{\alpha}) \in \widetilde{K}_H$ and $\overline{\beta} \in \pi/H$.

Throughout the paper, we shall use the notations of this section.

\section{Lifting the action of a group to a regular cover}\label{s3}

In this section we discuss the problem of lifting a simplicial group action on a one vertex Kan complex to a regular cover.

Let $G$ be a discrete group acting simplicially on a one vertex Kan complex $(K,\phi)$, with vertex $\phi$.
The action of $G$ induces a homomorphism $G \longrightarrow \Aut( \pi )$, given by $g\mapsto g_*,$ with $g_*$  defined by $g_*[x] := [gx]$, where $g \in G$ and $[x] \in \pi$.
\begin{thm} \label{thm32}
Let $H \leq \pi_1 (K, \phi)$ be a normal subgroup invariant under the action of $G$ on $\pi_1 (K, \phi)$. Then the $G-$action on $K$ can be lifted to a $G-$action on $K_H$ such that the covering map $p: K_H \longrightarrow K$ is $G-$equivariant. 

Moreover, for all $g \in G$, $b\in \widetilde{K}_{H}$ and $ \overline{\alpha}\in \pi_1( K , \phi)/H$
\begin{displaymath}
 g(b \cdot \overline{\alpha})= (gb)g_{*}(\overline{\alpha}).
\end{displaymath}

\end{thm}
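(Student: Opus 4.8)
The plan is to write down the obvious candidate for the lifted action and then check the handful of axioms, the only non-formal point being compatibility with the top face operator of $\widetilde{K}_H$. Since $H$ is normal in $\pi:=\pi_1(K,\phi)$ and invariant under every automorphism $g_{*}\in\Aut(\pi)$ coming from the $G$-action, each $g_{*}$ descends to an automorphism of $\pi/H$, which I will also write $g_{*}$; moreover, because $g\mapsto g_{*}$ is a homomorphism $G\to\Aut(\pi)$ preserving $H$, the induced assignment $g\mapsto g_{*}$ is a homomorphism $G\to\Aut(\pi/H)$. For $g\in G$ and $(x,\overline{\alpha})\in(\widetilde{K}_H)_n=K_n\times\pi/H$ I set
\[
g\cdot(x,\overline{\alpha}):=(gx,\;g_{*}\overline{\alpha}).
\]

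First I would verify that, for each fixed $g$, this is a simplicial self-map of $\widetilde{K}_H$. Commutation with the degeneracies $s_i$ and with the faces $\partial_i$ for $0\le i<n$ is immediate, because those operators of $\widetilde{K}_H$ leave the second coordinate untouched and the $G$-action on $K$ is simplicial (so $gs_i=s_ig$ and $g\partial_i=\partial_ig$). The one case needing a computation is $\partial_n$: one has
\[
g\cdot\partial_n(x,\overline{\alpha})=\bigl(g\partial_n x,\ g_{*}([\partial_0^{\,n-1}x]^{-1}\overline{\alpha})\bigr),\qquad
\partial_n\bigl(g\cdot(x,\overline{\alpha})\bigr)=\bigl(\partial_n(gx),\ [\partial_0^{\,n-1}(gx)]^{-1}\,g_{*}\overline{\alpha}\bigr),
\]
and these coincide because $g\partial_n x=\partial_n(gx)$, because $g_{*}$ is a homomorphism of $\pi/H$, and --- the key point --- because $g_{*}[\partial_0^{\,n-1}x]=[g\cdot\partial_0^{\,n-1}x]=[\partial_0^{\,n-1}(gx)]$: the first equality is the definition of $g_{*}$ on $\pi$ applied to the $1$-simplex $\partial_0^{\,n-1}x$ (a loop, since $K$ has a single vertex), the second is $g\partial_0^{\,n-1}=\partial_0^{\,n-1}g$.

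Next I would check the action axioms: $e\cdot(x,\overline{\alpha})=(x,\overline{\alpha})$ since $e_{*}$ acts as the identity, and $(gh)\cdot(x,\overline{\alpha})=((gh)x,(gh)_{*}\overline{\alpha})=(g(hx),g_{*}h_{*}\overline{\alpha})=g\cdot\bigl(h\cdot(x,\overline{\alpha})\bigr)$, using that the $G$-action on $K$ is a left action and that $g\mapsto g_{*}$ is a homomorphism into $\Aut(\pi/H)$; in particular each $g\cdot(-)$ is a simplicial automorphism, with inverse $g^{-1}\cdot(-)$. Equivariance of $p$ is then immediate: $p\bigl(g\cdot(x,\overline{\alpha})\bigr)=gx=g\cdot p(x,\overline{\alpha})$. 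For the displayed identity, write $b=(x,\overline{\beta})$; then using the right $\pi/H$-action $\psi$ on $\widetilde{K}_H$ (multiplication on the second coordinate) we get $g(b\cdot\overline{\alpha})=g\cdot(x,\overline{\beta}\,\overline{\alpha})=(gx,\ g_{*}(\overline{\beta})\,g_{*}(\overline{\alpha}))=(gb)\,g_{*}(\overline{\alpha})$, once more because $g_{*}$ is a homomorphism of $\pi/H$.

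I do not expect a real obstacle; the only delicate spot is the top-face computation, and in particular the identification $g_{*}[\partial_0^{\,n-1}x]=[\partial_0^{\,n-1}(gx)]$, which is exactly what forces the second coordinate to be twisted by $g_{*}$ (instead of being left alone) for the map to be well defined and simplicial. It is also worth recording at the outset that the construction makes sense only because the hypothesis that $H$ is $G$-invariant lets $g_{*}$ pass to the quotient $\pi/H$.
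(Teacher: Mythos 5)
Your proposal is correct and follows essentially the same route as the paper: you define the same diagonal lift $g\cdot(x,\overline{\alpha})=(gx,g_{*}\overline{\alpha})$ on $K_n\times\pi/H$ and verify compatibility with the top face $\partial_n$ via the identity $g_{*}[\partial_0^{\,n-1}x]=[\partial_0^{\,n-1}(gx)]$, which is exactly the paper's key computation, and you conclude the equivariance relation $g(b\cdot\overline{\alpha})=(gb)g_{*}(\overline{\alpha})$ the same way. The only difference is that you additionally spell out the action axioms and $p$-equivariance, which the paper leaves implicit.
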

\begin{proof}
Since the action of $G$ preserves $H$, i.e., $g(H) = H$, for all $g \in G$,  we have a homomorphism $\varphi \colon G \rightarrow \Aut( \pi  /H) $. By abuse of notation, we also denote $\varphi(g)$ by $g_*$.
Using this we lift the action of $G$ on $K$ to an action of $G$ on the regular cover $\widetilde{K}_H$ of $K$.
If $g \in G$, $k \in K$ and $\overline{\alpha} \in \pi / H$, then the action of $G$ on $(\widetilde{K}_H)_n=K_n \times \pi /H$ is the diagonal action given by
\begin{myeq}\label{eq1}
g\cdot (k,\overline{\alpha})= (gk, g_*(\overline{\alpha})).
\end{myeq}
We check that this action is well-defined. Let $(k, \overline{\alpha}) \in {(\widetilde{K}_H)}_{n}$. It is enough to check that $\partial_{n}$ commutes with the action of $g$ on $\widetilde{K}_H$.
We have,
$$\begin{array}{lll} \partial_{n} (g(k , \overline{\alpha}))
&=& \partial_{n} (gk , g_*( \overline{\alpha})) \\
&=& (\partial_{n} (gk) , [g  \partial_{0}^{n-1} k ]^{-1} g_*(\overline{\alpha}) )\\
&=& (g \partial_{n} k , g_*([\partial_{0}^{n-1} k]^{-1}) g_*(\overline{\alpha})) \\
&=&  (g \partial_{n} k , g_*([\partial_{0}^{n-1} k]^{-1} \overline{\alpha} ))\\
&=& g \cdot ( \partial_{n} k , [\partial_{0}^{n-1} k]^{-1}  \overline{\alpha})  \\
&=& g \cdot \partial_{n} (k, \overline{\alpha}).
\end{array}$$

Then the right action of $\pi /H$ on $\widetilde{K}_H$ is related to the left action of $G$ as follows:
$g\cdot((k,\overline{\alpha})\overline{\beta}) = g.(k, \overline{\alpha \beta}) = (gk, g_{*} (\overline{\alpha \beta})) = (gk,  g_{*}(\overline{\alpha})) g_{*}(\overline{\beta})$
= $(g \cdot (k, \overline{\alpha})) g_{*} (\overline{\beta})$.
\end{proof}

Let $G$ be a group and and $\pi$ be a $G$-module. For each $g \in G$, let $g_{*}$ denote the corresponding
automorphism of $\pi$.
\begin{defn}
A \emph{crossed homomorphism} is a function $\varphi \colon G \rightarrow \pi$ satisfying the relation $\varphi(gh)$ = $\varphi(g)g_{*}(h)$, for all $g, h \in G$.
\end{defn}
To each crossed homomorphism we associate the graph $G_{\varphi} \subset \pi \rtimes G$, which is the image of the isomorphism $g \mapsto (\varphi(g), g)$.

We will denote by $G_{y}$ the isotropy subgroup at any $y \in K$. Let $b \in \widetilde{K}_H$. Since $\pi /H$ acts by deck transformations on $\widetilde{K}_H$, for any $g \in G_{p(b)}$,
there is a unique $ \overline{\alpha} \in \pi /H$ such that $gb = b\overline{\alpha}$. We define a crossed homomorphism $$r_{b} \colon G_{p(b)} \longrightarrow \pi /H,~~g\mapsto \overline{\alpha}$$
Let $g, g^{'} \in G_{p(b)}$. Then, $b r_{b}(gg^{'}) = (gg^{'})b = (g(g^{'}.b) = g(b r_{b}(g^{'}))) = (gb)(g_{*}(r_{b}(g^{'}))) = b r_{b}(g) g_{*} (r_{b}(g^{'}))$. Therefore, $r_{b} (gg^{'}) = r_{b}(g) g_{*} (r_{b}(g^{'})) $
and $r_{b}$ is indeed a crossed homomorphism with kernel,
\begin{displaymath}
\ker r_{b} = \{ g \in G_{p(b)} | r_{b}(g) = \overline{e}\in \pi/H \} = G_{b}.
\end{displaymath}
We have the following lemma.

\noindent
\begin{lemma} \label{lem33}
 For $b \in \widetilde{K}_H$, $\overline{\alpha} \in \pi /H$, $\hat{g} \in G$ we have:
\begin{enumerate}[(i)]
\item $r_{b \overline{\alpha}} = \overline{\alpha}^{-1} r_{b}(g)g_{*}(\overline{\alpha})$.
\item $r_{\hat{g} b}(\hat{g}g\hat{g}^{-1}) = \hat{g}_{*} r_{b}(g)$, for all $g \in G_{p(b)}$.
\end{enumerate}
\end{lemma}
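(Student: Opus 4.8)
The plan is to verify both identities by unwinding the defining relation $gb = b\,r_b(g)$ for the crossed homomorphism $r_b$ and using the compatibility between the left $G$-action and the right $\pi/H$-action established in Theorem \ref{thm32}, namely $g(b\cdot\overline{\beta}) = (gb)\,g_*(\overline{\beta})$, together with the fact that $\pi/H$ acts freely by deck transformations (so that an element $\overline{\gamma}\in\pi/H$ is uniquely determined by the equation $b\cdot\overline{\gamma} = c$).

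For part (i), I would first note that for $\overline{\alpha}\in\pi/H$ the point $b\overline{\alpha}$ has isotropy group $G_{p(b\overline{\alpha})} = G_{p(b)}$, since $p(b\overline{\alpha}) = p(b)$. Then for $g\in G_{p(b)}$ I compute $g(b\overline{\alpha})$ in two ways: on one hand it equals $(b\overline{\alpha})\,r_{b\overline{\alpha}}(g)$ by definition of $r_{b\overline{\alpha}}$; on the other hand, by the Theorem \ref{thm32} relation it equals $(gb)\,g_*(\overline{\alpha}) = b\,r_b(g)\,g_*(\overline{\alpha})$. Equating, $b\,\overline{\alpha}\,r_{b\overline{\alpha}}(g) = b\,r_b(g)\,g_*(\overline{\alpha})$, and cancelling the free action of $b$ on the left gives $r_{b\overline{\alpha}}(g) = \overline{\alpha}^{-1}\,r_b(g)\,g_*(\overline{\alpha})$, which is the asserted formula. (I note in passing that the statement as printed suppresses the argument $g$ on the left-hand side; I would write $r_{b\overline{\alpha}}(g)$ for clarity.)

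For part (ii), fix $g\in G_{p(b)}$ and set $\hat{g}b =: b'$; then $p(b') = \hat{g}p(b)$, so $\hat{g}g\hat{g}^{-1}$ fixes $p(b')$ and $r_{b'}(\hat{g}g\hat{g}^{-1})$ makes sense. I compute $(\hat{g}g\hat{g}^{-1})\cdot b'$ directly: it equals $b'\,r_{b'}(\hat{g}g\hat{g}^{-1})$ by definition. But also $(\hat{g}g\hat{g}^{-1})\cdot(\hat{g}b) = \hat{g}g\cdot b = \hat{g}(b\,r_b(g)) = (\hat{g}b)\,\hat{g}_*(r_b(g)) = b'\,\hat{g}_*(r_b(g))$, again using the Theorem \ref{thm32} relation in the third equality. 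Cancelling $b'$ on the left via freeness yields $r_{b'}(\hat{g}g\hat{g}^{-1}) = \hat{g}_*(r_b(g))$, as claimed.

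Neither part presents a genuine obstacle; the content is entirely bookkeeping with the two structural identities and the freeness of the deck action. The one point requiring a small remark is to confirm that $r_{b'}$ is defined on the correct subgroup in each case — i.e. that conjugation and right translation move isotropy groups as indicated — but this is immediate from $p(\hat{g}b) = \hat{g}p(b)$ and $p(b\overline{\alpha}) = p(b)$. I would also double-check the order of the automorphism versus translation in part (i) against the convention $\varphi(gh) = \varphi(g)g_*(h)$ fixed just before the lemma, since a left-versus-right action discrepancy there is the only place an error could slip in.
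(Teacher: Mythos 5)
Your proposal is correct and follows essentially the same route as the paper: both parts are proved by computing $g(b\overline{\alpha})$ (resp.\ $(\hat{g}g\hat{g}^{-1})(\hat{g}b)$) in two ways using the defining relation $gb = b\,r_b(g)$ and the compatibility $g(b\cdot\overline{\beta}) = (gb)\,g_*(\overline{\beta})$ from Theorem \ref{thm32}, then cancelling via the free deck action. Your remarks on the domains of $r_{b\overline{\alpha}}$ and $r_{\hat{g}b}$ and the suppressed argument $g$ in (i) match the paper's own observations, so nothing further is needed.
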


\noindent
\begin{proof}
(i) Note that since $p(b) = p(b \overline{\alpha})$, $r_{b}$ and $r_{b \overline{\alpha}}$ are both defined on $G_{p(b)}$. We have
$b \overline{\alpha} r_{b \overline{\alpha}}(g) = g(b \overline{\alpha}) = (gb) g_{*}(\overline{\alpha}) = br_{b}(g)g_{*}(\overline{\alpha}) = b=\overline{\alpha}$ $\overline{\alpha}^{-1} r_{b}(g)g_{*}(\overline{\alpha})$.

\noindent
(ii) Again, note that $G_{p(\hat{g}b)} = G_{\hat{g}p(b)} = \hat{g} G_{p(b)} \hat{g}^{-1}$. Therefore all the maps are defined. Now, if $g \in G_{p(b)}$, then $gb = br_{b}(g)$.
Hence, $\hat{g}g\hat{g}^{-1}\hat{g}b = \hat{g}(gb) = \hat{g}(br_{b}(g)) = (\hat{g}b)(\hat{g}_{*}r_{b}(g))$.
\end{proof}

\noindent
Let $K^G \subset K$ denote the fixed point set of the action of $G$ on $K$ and let $E \subset \widetilde{K}_{H}$ be the fixed point set of the lifted action of $G$ on $\widetilde{K}_{H}$.
Then we have the following result:

\noindent
\begin{prop} \label{prop34}
 If $ \overline{\alpha}\in \pi /H$ is such that $E \cap E\overline{\alpha}$ is non-empty, then $g_{*}(\overline{\alpha}) =\overline{\alpha} $ for all $g \in G$ and $E = E\overline{\alpha}$.
\end{prop}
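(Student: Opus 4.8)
The plan is to argue simplex by simplex, using only the twisted equivariance identity $g(b\cdot\overline{\beta}) = (gb)\,g_{*}(\overline{\beta})$ from Theorem \ref{thm32} together with the observation that the deck action of $\pi/H$ on $\widetilde{K}_H$ is free on simplices: indeed $(k,\overline{\beta})\overline{\gamma} = (k,\overline{\beta}\overline{\gamma})$ equals $(k,\overline{\beta})$ only when $\overline{\gamma} = \overline{e}$.

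First I would pick a simplex $b$ lying in $E \cap E\overline{\alpha}$. By definition of $E\overline{\alpha}$ there is $c \in E$ with $b = c\overline{\alpha}$; acting on the right by $\overline{\alpha}^{-1}$ gives $b\overline{\alpha}^{-1} = c \in E$. Thus both $b$ and $b\overline{\alpha}^{-1}$ are fixed by every $g \in G$. Feeding $b\overline{\alpha}^{-1}$ into the identity of Theorem \ref{thm32} and using $gb = b$ yields
\[
b\overline{\alpha}^{-1} = g(b\overline{\alpha}^{-1}) = (gb)\,g_{*}(\overline{\alpha}^{-1}) = b\,g_{*}(\overline{\alpha}^{-1}),
\]
and freeness of the deck action forces $g_{*}(\overline{\alpha}^{-1}) = \overline{\alpha}^{-1}$, hence $g_{*}(\overline{\alpha}) = \overline{\alpha}$, for every $g \in G$. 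This is the first assertion.

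For the equality $E = E\overline{\alpha}$, I would first note that $E$ is a subcomplex of $\widetilde{K}_H$ because the $G$-action is simplicial, while $E\overline{\alpha}$ is a subcomplex because right multiplication by $\overline{\alpha}$ is a simplicial automorphism; so it suffices to compare simplices. For any simplex $y \in E$ one computes $g(y\overline{\alpha}) = (gy)\,g_{*}(\overline{\alpha}) = y\overline{\alpha}$ for all $g \in G$, using $gy = y$ and $g_{*}(\overline{\alpha}) = \overline{\alpha}$; hence $y\overline{\alpha} \in E$, i.e. $E\overline{\alpha} \subseteq E$. Running the same computation with $\overline{\alpha}^{-1}$ in place of $\overline{\alpha}$ — legitimate since we have also shown $g_{*}(\overline{\alpha}^{-1}) = \overline{\alpha}^{-1}$ — gives $E\overline{\alpha}^{-1} \subseteq E$, equivalently $E \subseteq E\overline{\alpha}$, and therefore $E = E\overline{\alpha}$.

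I do not expect a serious obstacle here: the whole argument is a substitution into the formula of Theorem \ref{thm32}. The only two points deserving a word of care are that the deck action of $\pi/H$ is free on simplices (immediate from its explicit description), which is what licenses cancelling $b$ in the display above, and that $E$ and $E\overline{\alpha}$ are genuine subcomplexes, so that the simplexwise reasoning of the last paragraph really does yield the set-level identity.
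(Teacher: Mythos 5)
Your proof is correct and is essentially the paper's argument: the paper notes that for $b\in E$ the crossed homomorphism $r_b$ is trivial and applies Lemma \ref{lem33}(i), $r_{b\overline{\alpha}}(g)=\overline{\alpha}^{-1}r_b(g)g_*(\overline{\alpha})$, which is exactly the computation in your display, with the freeness of the deck action hidden in the definition of $r_b$. You simply unwind the crossed-homomorphism packaging and work directly from the equivariance formula of Theorem \ref{thm32}, which is fine.
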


\noindent
\begin{proof} Note that for any $b \in E$,~$G_{b} = G$ and the crossed homomorphism $r_{b} \colon G \rightarrow \pi /H$ is trivial.
Since $E \cap E\overline{\alpha}$ is non-empty, choose $b \in E$ such that $b \overline{\alpha}\in E$. Then $r_{b\overline{\alpha}} \colon G \rightarrow \pi /H$ is also trivial. But, by previous lemma, we have
$r_{b\overline{\alpha}}(g) = \overline{\alpha}^{-1} r_{b}(g)g_{*}(\overline{\alpha})$. Therefore, $\overline{\alpha}^{-1}g_{*}(\overline{\alpha}) = 1$ for all $g \in G$. That is, $g_{*}(\overline{\alpha}) = \overline{\alpha}$ for all $g \in G$,
which also implies $E\overline{\alpha} = E$.
\end{proof}

\noindent
Let $\Gamma \subset \pi/H$  be the fixed point set of the action of $G$ on $\pi/H$, i.e.,
\begin{displaymath}
 \Gamma := \{ \overline{\alpha} \in \pi/H | g_{*}(\overline{\alpha}) = \overline{\alpha} ,  \forall g \in G \}.
\end{displaymath}
Then, since $E$ is invariant under the action of $\Gamma$ on $\widetilde{K}_H$, the previous result shows that
\begin{displaymath}
 E / \Gamma = \Im(\pi |_{E} : E \rightarrow K) = K^G.
\end{displaymath}

We end this section by combining the two actions on the regular cover of a one vertex Kan complex.
Let $\varphi \colon G \rightarrow \Aut(\pi/H)$ denote the homomorphism corresponding to the action of $G$ and for $g \in G$, let $g_{*}$ denote the automorphism $\varphi(g)$.
Then $\varphi$ can be used to form the semi direct product $G \rtimes_{\varphi} \pi/H$, whose undelying set is $G\times \pi/H$, with multiplication in the semi direct product given by
\begin{displaymath}
 (g, \overline{\alpha})\cdot (h, \overline{\beta}) := (gh, h_{*}^{-1}(\overline{\alpha})\overline{\beta}).
\end{displaymath}
\noindent
Then we combine the left covering action of $G$ on $\widetilde{K}_{H}$ with the right action of $\pi/H$, by deck transformations, to a right action of $G \rtimes_{\varphi} \pi/H$ as follows:
if $b = (k,\overline{\beta}) \in \widetilde{K}_{H}$ and $(g,\overline{\alpha}) \in G \rtimes \pi/H$, then
\begin{displaymath}
b \cdot (g, \overline{\alpha} )= (k,\overline{\beta})\cdot( g,\overline{\alpha}) =  (g^{-1}(k,\overline{\beta}))\overline{\alpha} = (g^{-1}k,g_{*}^{-1}(\overline{\beta})\overline{\alpha}).
\end{displaymath}

\noindent
Note that, if $b\cdot(g,\overline{\alpha}) = b$, then $(g^{-1}b)\overline{\alpha} =b$,that is, $g^{-1}b = b\overline{\alpha}^{-1}$, that is,  $ \overline{\alpha} = r_{b}(g)$. Therefore, at each point $b \in \widetilde{K}_{H}$, the isotropy
subgroup of the action of $G \rtimes \pi/H$ is the graph of the function $r_{b} \colon G_{p(b)} \rightarrow \pi/H$. We summarize the above in a lemma:

\begin{lemma} \label{lem35}
Let a group $G$ act simplicially on $K$ such that the normal subgroup $H \leq \pi$ is invariant under the action of $G$. Then, at each point $b \in \widetilde{K}_{H}$, there is a canonical crossed homomorphism
$r_{b} \colon G_{p(b)} \rightarrow \pi/H$, whose graph is the isotropy subgroup of the action of $G \rtimes_{\varphi} \pi/H$ on $\widetilde{K}_{H}$.
\end{lemma}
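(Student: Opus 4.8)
The plan is to assemble the statement from the constructions already carried out in this section; in fact almost all the work is done, and the proof amounts to recording it. We are in the situation of Theorem \ref{thm32}: since $H$ is $G$-invariant the $G$-action lifts to $\widetilde{K}_H$, the covering $p$ is $G$-equivariant, and $g(c\cdot\overline{\gamma})=(gc)\cdot g_*(\overline{\gamma})$ for all $c\in\widetilde{K}_H$, $\overline{\gamma}\in\pi/H$ and $g\in G$. The right deck action $\psi$ of $\pi/H$ on $\widetilde{K}_H$ is free, and simply transitive on each fibre of $p$. Hence, given $b\in\widetilde{K}_H$ and $g\in G_{p(b)}$, the simplex $gb$ lies in the same fibre as $b$ (because $p(gb)=g\,p(b)=p(b)$), so there is a unique $\overline{\alpha}\in\pi/H$ with $gb=b\overline{\alpha}$, and one sets $r_b(g):=\overline{\alpha}$. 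Since $b$ is the only datum entering this definition, $r_b$ is canonical.

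Next I would verify that $r_b\colon G_{p(b)}\to\pi/H$ is a crossed homomorphism for the $G$-module structure on $\pi/H$ given by $\varphi$. For $g,g'\in G_{p(b)}$ the twisting relation gives
\begin{align*}
b\cdot r_b(gg')&=(gg')b=g\bigl(g'b\bigr)=g\bigl(b\cdot r_b(g')\bigr)\\
&=(gb)\cdot g_*(r_b(g'))=b\cdot\bigl(r_b(g)\,g_*(r_b(g'))\bigr),
\end{align*}
and freeness of $\psi$ forces $r_b(gg')=r_b(g)\,g_*(r_b(g'))$. This is exactly the computation carried out in the paragraph preceding Lemma \ref{lem33}, where it is also observed that $r_b(g)=\overline{e}$ precisely when $gb=b$, so that $\ker r_b=G_b$.

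Finally I would read off the isotropy subgroup of the combined right $(G\rtimes_\varphi\pi/H)$-action at $b$. If $(g,\overline{\alpha})$ fixes $b$, then applying $p$ shows that $g\in G_{p(b)}$; and unwinding $b\cdot(g,\overline{\alpha})=(g^{-1}b)\overline{\alpha}=b$ by means of the defining relation $gb=b\,r_b(g)$, the twisting relation, and freeness of $\psi$ yields $\overline{\alpha}=r_b(g)$, as already recorded just before the statement of the lemma. Conversely, every pair of this form fixes $b$. Hence the isotropy subgroup at $b$ is precisely the graph $G_{r_b}=\{(g,r_b(g)):g\in G_{p(b)}\}\subset G\rtimes_\varphi\pi/H$ of $r_b$, which is the assertion.

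The statement presents no genuine obstacle; the one point demanding care is the consistent bookkeeping of the twist $g(c\overline{\gamma})=(gc)g_*(\overline{\gamma})$ relating the left $G$-action to the right $\pi/H$-action, together with its compatibility with the multiplication $(g,\overline{\alpha})(h,\overline{\beta})=(gh,h_*^{-1}(\overline{\alpha})\overline{\beta})$ on the semi-direct product --- it is exactly this compatibility that promotes the graph of $r_b$ from a subset to a subgroup, so that the phrase ``isotropy subgroup'' carries content.
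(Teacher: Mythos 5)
Your proposal is correct and follows essentially the same route as the paper: the lemma is stated there as a summary of the immediately preceding paragraphs, which define $r_b$ via the simply transitive deck action, verify the crossed homomorphism identity $r_b(gg')=r_b(g)\,g_*(r_b(g'))$ using the twisting relation of Theorem \ref{thm32}, and unwind $b\cdot(g,\overline{\alpha})=b$ to get $\overline{\alpha}=r_b(g)$, identifying the isotropy subgroup with the graph of $r_b$. Your write-up matches this computation step for step (and is, if anything, slightly more careful about freeness and about why the graph is a subgroup).
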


\section{Group extensions and regular covers}\label{s4}

In this section, we associate a split extension of $\pi/H$ by a discrete group $G$, acting on $K$, such that the extension group acts on $\widetilde{K}_H$, covering the given action of $G$. 
Conversely, if we start with a split extension of $\pi/H$ by a finite group $G$, then we can find a one vertex Kan complex with an action of $G$, such that if we apply the previous result then we obtain the given extension.

Let $N$ and $G$ be groups and let $\Aut(N)$ denote the group of automorphisms of $N$. Let $\varphi : G \rightarrow \Aut(N) $ denote a group homomorphism. Recall that we can define a group extension $L$ given by

\begin{displaymath}
 1 \longrightarrow N \longrightarrow L \longrightarrow G \longrightarrow 1,
\end{displaymath}
where $L = N \times G$ and multiplication in $L$ is defined as
$$
 (g,x)\cdot (h,y) = (g\varphi (x) (h), xy), ~~g,h \in N ~\mbox{and}~ x,y \in G.
$$
\noindent
Given the above situation, we have the following lemma.

\begin{lemma} \label{lem41}
Let $M$ be a group. If $h \colon N \rightarrow M$ is a group homomorphism, then $h$ can be extended to a homomorphism $H \colon L \rightarrow M$ if and only if
there exist a homomorphism $T \colon G \rightarrow M$ such that
$T(x)\cdot h(g) = h( \varphi (x) (g))\cdot T(x)$ , for all $x \in G$ and $g \in N$.
\end{lemma}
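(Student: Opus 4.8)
The plan is to exploit the semidirect-product structure of $L$ directly. Write $\iota_N\colon N\to L$ for the map $g\mapsto (g,1)$ and $\iota_G\colon G\to L$ for the map $x\mapsto (1,x)$. From the multiplication rule $(g_1,x)\cdot(g_2,y)=(g_1\varphi(x)(g_2),xy)$ one checks at once that $\iota_N$ is the prescribed inclusion of $N$ into $L$, and that $\iota_G$ is a homomorphic section of $L\to G$: indeed $(1,x)(1,y)=(\varphi(x)(1),xy)=(1,xy)$ since each $\varphi(x)$, being an automorphism, fixes $1_N$. Moreover every element of $L$ factors as $(g,x)=\iota_N(g)\,\iota_G(x)$, so a homomorphism $H\colon L\to M$ with $H\circ\iota_N=h$ (this is what ``$H$ extends $h$'' means) is completely determined by $h$ together with the restriction $T:=H\circ\iota_G$.

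For the forward direction I would assume $H$ extends $h$ and set $T:=H\circ\iota_G$, which is a homomorphism $G\to M$ as a composite of homomorphisms. The compatibility identity then falls out of conjugation inside $L$: a one-line computation gives $\iota_G(x)\,\iota_N(g)\,\iota_G(x)^{-1}=(1,x)(g,1)(1,x^{-1})=(\varphi(x)(g),1)=\iota_N(\varphi(x)(g))$, using that $(1,x)^{-1}=(1,x^{-1})$, and applying $H$ to both sides yields $T(x)h(g)T(x)^{-1}=h(\varphi(x)(g))$, which is the asserted relation.

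For the converse I would take a homomorphism $T\colon G\to M$ satisfying $T(x)h(g)=h(\varphi(x)(g))T(x)$ for all $x\in G$, $g\in N$, and simply define $H\colon L\to M$ by $H(g,x):=h(g)T(x)$. Then $H(g,1)=h(g)T(1_G)=h(g)$, so $H$ extends $h$ once we know it is a homomorphism. To verify that, I would expand $H\bigl((g_1,x)(g_2,y)\bigr)=H(g_1\varphi(x)(g_2),xy)=h(g_1)\,h(\varphi(x)(g_2))\,T(x)T(y)$ and compare with $H(g_1,x)H(g_2,y)=h(g_1)\,T(x)h(g_2)\,T(y)$; the two expressions agree precisely by the hypothesis $T(x)h(g_2)=h(\varphi(x)(g_2))T(x)$.

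The argument is entirely formal, so I do not anticipate a genuine obstacle; the only points requiring care are notational bookkeeping --- the placement of $N$ and $G$ in the pair $(g,x)$, the form of inverses $(1,x)^{-1}=(1,x^{-1})$ in $L$, and the fact that each $\varphi(x)$ fixes the identity of $N$.
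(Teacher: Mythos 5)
Your proof is correct and follows essentially the same route as the paper: in the converse direction you define $H(g,x)=h(g)T(x)$ and verify multiplicativity using the relation $T(x)h(g)=h(\varphi(x)(g))T(x)$, exactly as the paper does, and in the forward direction you set $T:=H\circ\iota_G$ and extract the relation from the identity $(1,x)(g,1)=(\varphi(x)(g),x)$ in $L$ (you phrase it as conjugation, the paper as a direct factorization, but the computation is the same). No gaps; your bookkeeping of the semidirect-product multiplication, inverses, and the meaning of ``extends'' matches the paper's conventions.
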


\noindent
\begin{proof} First suppose that there exists a homomorphism $T \colon G \rightarrow M$ such that $T(x)\cdot h(g) = h( \varphi (x) (g))\cdot T(x)$ , for all $x \in G$ and $g \in N$.
Then we can define $H \colon L \rightarrow M$ by $H(g,x)$ = $h(g) \cdot T(x)$. Since

$$\begin{array}{lll} H(g_{1} \cdot \varphi (x) g_{2} , x.y)
&=& h(g_{1})\cdot h (\varphi (x) (g_{2}) ) \cdot T(x) \cdot T(y) \\
&=&h(g_{1}) \cdot T(x) h(g_{2}) \cdot T(y)  \\
&=&H(g_{1} , x) \cdot H(g_{2} , x),
\end{array}$$
\noindent
for all $g_{1} , g_{2} \in N$ and $x,y \in G $, $H$ is the required homomorphism.

\noindent
Conversely, suppose an extended homomorphism $H \colon L \rightarrow M$ exists. We can define $T \colon G \rightarrow M$ by $T(x)$ := $H(0,x)$, where $0$ denotes the identity of $N$.
Then $T$ is obviously a homomorphism and for $x \in G$ and $g \in N$, we have
$$\begin{array}{lll} T(x) h(g)
&=& H ((0,x)\cdot (g,e_{G})) \\
&=& H ( \varphi (x) (g) , x)  \\
&=& H(\varphi (x)(g), e_{G} ) H(0,x) \\
&=& h(\varphi(x)(g))T(x)
\end{array}$$

\noindent
Hence the lemma.
\end{proof}

Let $G$ be a discrete group acting on $K$, such that the action by any element of $G$ preserves $H$, for some normal subgroup $H \leq \pi$ . Then we have a homomorphism $\varphi \colon G \rightarrow \Aut( \pi  /H) $.
This determines a split extension,
\begin{displaymath}
 1 \rightarrow \pi /H \rightarrow L \xrightarrow{\eta} G \longrightarrow 1.
\end{displaymath}
Recall that $\pi/H$ acts on $\widetilde{K}_H$ by the homomorphism $\psi$ (cf. Eq. \ref{eq}).
\begin{thm} \label{thm42}

 With notations as above, the $\pi/H$ action on $\widetilde{K}_H$ can be extended to a $L$-action on $\widetilde{K}_H$ such that under $\eta \times p \colon (L , \widetilde{K}_H) \rightarrow (G,K)$, where $p \colon \widetilde{K}_H \rightarrow K$ is the covering projection, $(L, \widetilde{K}_H)$ covers the action $(G,K)$.
 Furthermore, there is a natural isomorphism between the isotropy groups $G_{p(x)}$ and $L_{x}$, where $x \in \widetilde{K}_H$.
\end{thm}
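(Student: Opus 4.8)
The plan is to build the $L$-action on $\widetilde{K}_H$ by combining the lifted $G$-action of Theorem~\ref{thm32} with the right deck-transformation action $\psi$ of~\eqref{eq}, and then to read off the covering statement and the isotropy isomorphism, the essential extra input being that $\psi$ acts freely on each fibre of $p$. Concretely, with $L=\pi/H\times G$ under the multiplication $(\overline\alpha,g)(\overline\beta,h)=(\overline\alpha\,g_*(\overline\beta),gh)$, I would define a right action of $L$ on $\widetilde{K}_H$ by
\[
 b\cdot(\overline\alpha,g):=(g^{-1}\cdot b)\cdot g_*^{-1}(\overline\alpha),\qquad b\in\widetilde{K}_H,
\]
where $g^{-1}\cdot b$ is the Theorem~\ref{thm32} action and $b'\cdot g_*^{-1}(\overline\alpha)=\psi\big(g_*^{-1}(\overline\alpha)\big)(b')$. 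That this is a right action is a direct check: expanding $\big(b\cdot(\overline\alpha,g)\big)\cdot(\overline\beta,h)$ and using the relation $g(b'\cdot\overline\delta)=(gb')\,g_*(\overline\delta)$ of Theorem~\ref{thm32} together with $g\mapsto g_*$ being a homomorphism, the identity reduces to $(gh)_*^{-1}\big(g_*(\overline\beta)\big)=h_*^{-1}(\overline\beta)$ in $\pi/H$. The action is simplicial because its two constituents are, and putting $g=e$ shows that it restricts on $\pi/H\hookrightarrow L$ to $\psi$, so it genuinely extends the $\pi/H$-action. Alternatively, the construction can be packaged through Lemma~\ref{lem41}, whose hypothesis $T(x)h(g)=h(\varphi(x)(g))T(x)$ is exactly the compatibility relation at the end of Theorem~\ref{thm32}.

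For the covering statement: since $p$ is a covering projection it collapses $\psi$-orbits, and by Theorem~\ref{thm32} it is $G$-equivariant; hence $p\big(b\cdot(\overline\alpha,g)\big)=p(g^{-1}b)=g^{-1}\cdot p(b)=\eta(\overline\alpha,g)^{-1}\cdot p(b)$. So, just as with the $(G\rtimes_{\varphi}\pi/H)$-action built at the end of Section~\ref{s3}, the pair $\eta\times p\colon(L,\widetilde{K}_H)\to(G,K)$ exhibits $(L,\widetilde{K}_H)$ as a cover of $(G,K)$.

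For the isotropy statement, fix $x\in\widetilde{K}_H$. If $(\overline\alpha,g)\in L_x$, applying $p$ and the displayed relation gives $g\in G_{p(x)}$, so $\eta(L_x)\subseteq G_{p(x)}$. Since $\psi$ permutes $p^{-1}(p(x))$ simply transitively, $L_x\cap\pi/H=\{e\}$, whence $\eta|_{L_x}$ is injective (if $(\overline\alpha,g),(\overline\beta,g)\in L_x$ then $(\overline\alpha,g)(\overline\beta,g)^{-1}\in L_x\cap\pi/H$). For surjectivity onto $G_{p(x)}$, take $g\in G_{p(x)}$; then $gx=x\cdot r_x(g)$ with $r_x\colon G_{p(x)}\to\pi/H$ the canonical crossed homomorphism of Section~\ref{s3}, and a short computation gives $x\cdot(r_x(g),g)=x$, so $(r_x(g),g)\in L_x$ and $\eta(r_x(g),g)=g$. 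Hence $\eta|_{L_x}\colon L_x\xrightarrow{\ \cong\ }G_{p(x)}$, and in fact $L_x=\{(r_x(g),g):g\in G_{p(x)}\}$ is the graph of $r_x$, closed under multiplication precisely because of the crossed-homomorphism identity (compare Lemma~\ref{lem35}). Being the restriction of the fixed map $\eta$, this isomorphism is canonical, and it is compatible with change of base point through the formulas of Lemma~\ref{lem33}; this is the asserted naturality.

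The only step demanding real care is the construction of the action: the defining formula must respect \emph{$L$'s own}, generally non-abelian, semidirect-product multiplication, i.e.\ the left $G$-action and the right $\pi/H$-action must be interleaved in the correct order and with the correct twist by $g_*^{-1}$. This is exactly the left/right reconciliation that Lemma~\ref{lem41} is designed to perform, its hypothesis being nothing but the last identity of Theorem~\ref{thm32}. Once the action is pinned down, the covering property and the isotropy isomorphism are formal, resting only on the freeness of the deck action.
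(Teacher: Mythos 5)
Your proposal is correct and follows essentially the same route as the paper: the $L$-action is obtained by interleaving the lifted $G$-action of Theorem \ref{thm32} with the deck action $\psi$, with the compatibility relation $T(f)\psi(\overline{\alpha})=\psi(\varphi(f)(\overline{\alpha}))T(f)$ feeding into Lemma \ref{lem41}, and the isotropy isomorphism comes from assigning to $g\in G_{p(x)}$ the unique deck element returning $gx$ to $x$. Your treatment of the isotropy groups via the crossed homomorphism $r_x$ and its graph (Lemma \ref{lem35}) is just a slightly more explicit version of the correspondence the paper writes down directly.
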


\begin{proof}

Using Theorem \ref{thm32}, we lift the $G$-action to an action on the regular cover $\widetilde{K}_H$.
Recall from Eq. \ref{eq1} that this action $T \colon G \rightarrow \Aut(\widetilde{K}_H)$ is given by  $(x, \overline{\sigma}) \mapsto (f\cdot x , \varphi(f)\cdot \overline{\sigma})$. Then,

$$\begin{array}{lll} T(f)\psi(\overline{\alpha}) (x, \overline{\sigma})
&=& T(f) (x, \overline{\sigma} \overline{\alpha}) \\
&=& (fx , \varphi(f) (\overline{\sigma} \overline{\alpha})) \\
&=& (fx , \varphi(f)(\overline{\sigma})\varphi(f)(\overline{\alpha}))\\
&=& \psi(\varphi(f) (\overline{\alpha})) (fx , \varphi(f)\overline{\sigma} ) \\
&=& \psi(\varphi(f)(\overline{\alpha})) T(f)(x,\overline{\sigma}).
\end{array}$$

\noindent
Hence, by Lemma \ref{lem41} we have an extension $\Psi \colon L \rightarrow \Aut(\widetilde{K}_H) $,  which gives an action of $L$ on $\widetilde{K}_H$.

For $f \in G_{p(x)}$, choose unique $\overline{\alpha} \in \pi/H$ such that $T(f)(x) = \psi (\overline{\alpha}^{-1})(x)$. This gives a one-to-one correspondence between the isotropy groups $G_{p(x)}$ and  $L_{x}$. 
Also, since, for $(f, \overline{\alpha} ) $ and $(g, \overline{\beta}) \in L_{x} $,
 $$ \begin{array}{lll} x
&=& \psi(\overline{\alpha}  ) \cdot T(f) \cdot \psi (\overline{\beta}) \cdot T(g) (x)  \\
&=& \psi(\overline{\alpha} ) \cdot \psi (\varphi(f) (\overline{\beta})) \cdot T(f) \cdot T(g) (x) \\
&=& \Psi(\overline{\alpha} \cdot \varphi(f) (\overline{\beta})) \cdot T(f \cdot g)(x)
\end{array}$$
\noindent
the above correspondence between $G_{p(x)}$ and  $L_{x}$ is an isomorphism of groups.

\end{proof}

\noindent
Let us now consider the converse problem, viz., if we are given a group extension
\begin{displaymath}
 1 \longrightarrow \pi /H \longrightarrow L \longrightarrow G \longrightarrow 1
\end{displaymath}
of $\pi/H$ by a {\em finite} $G$, can we realize this extension simplicially by the previous construction? This can be done as follows provided that $L$ is the semi-direct product of $\pi /H$ and $G$, corresponding to some homomorphism $\varphi \colon G \rightarrow \Aut( \pi  /H) $.

\begin{thm} \label{thm43}
Given a split extension
\begin{displaymath}
 1 \longrightarrow \pi /H \longrightarrow L \longrightarrow G \longrightarrow 1,
\end{displaymath}
of $\pi /H $ by a finite group $G$, corresponding to a homomorphism $\varphi \colon G \rightarrow \Aut( \pi  /H) $, there is a one vertex Kan complex with $G$-action, such that  application of construction of Theorem \ref{thm42} on it yields the given extension.
\end{thm}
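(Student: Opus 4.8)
\emph{Proof proposal.} The plan is to realize the prescribed homomorphism $\varphi$ geometrically by a classifying complex and then invoke Theorem \ref{thm42}. Write $Q := \pi/H$. I would take for the desired complex $Y$ the standard bar complex $\overline{W}Q$: the simplicial set with $Y_n = Q^{\,n}$ equipped with the usual bar faces and degeneracies, and with its unique $0$-simplex $\phi_Y$. It is standard that $\overline{W}Q$ is a minimal Kan complex which is aspherical, with a canonical identification $\pi_1(Y,\phi_Y) = Q$; in particular $(Y,\phi_Y)$ is a one vertex aspherical Kan complex in the sense of Section \ref{s1}, and it will be the complex claimed by the theorem.

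Next I would equip $Y$ with a $G$-action. Since $\overline{W}$ is a functor from groups to simplicial sets, the assignment $\theta \mapsto \overline{W}\theta$ is a group homomorphism $\Aut(Q) \to \Aut(Y)$, and composing it with $\varphi$ produces a simplicial action $T \colon G \to \Aut(Y)$, $T(g) = \overline{W}(\varphi(g))$, which fixes $\phi_Y$. The essential point is that the abstract kernel of this action --- the homomorphism $G \to \Aut(\pi_1(Y,\phi_Y)) = \Aut(Q)$ that it induces --- is again $\varphi$; this I would deduce from the naturality of the identification $\pi_1(\overline{W}Q,\phi_Y) = Q$, which makes $\overline{W}\theta$ act on $\pi_1$ as $\theta$.

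Finally I would run the construction of Theorem \ref{thm42} on the pair $(G,Y)$ together with the normal (and vacuously $G$-invariant) subgroup $\{\overline{e}\} \trianglelefteq \pi_1(Y,\phi_Y)$. The associated regular cover $\widetilde{Y}_{\{\overline{e}\}}$ is the universal cover $\widetilde{Y}$, on which $\pi_1(Y,\phi_Y)/\{\overline{e}\} = Q$ acts by deck transformations, and Theorem \ref{thm42} extends this to an action of the split extension $1 \to Q \to L' \to G \to 1$ whose multiplication is governed by the abstract kernel of $(G,Y)$. Since that abstract kernel is $\varphi$, the group $L'$ is exactly $G \rtimes_{\varphi} Q = G \rtimes_{\varphi}(\pi/H)$, i.e.\ the given extension $L$, and Theorem \ref{thm42} simultaneously yields that $(L',\widetilde{Y})$ covers $(G,Y)$ --- the desired simplicial realization.

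I expect the only step requiring real care to be the identification of the abstract kernel of the $\overline{W}$-action with the given $\varphi$ (equivalently, the compatibility of the functor $\overline{W}$ with the computation of $\pi_1$); once this is in hand, Theorem \ref{thm42} applied to an action with abstract kernel $\varphi$ and with $H$ trivial returns, by its very construction, the split extension of $\pi/H$ by $G$ determined by $\varphi$, which is precisely $L$. The remaining ingredients --- that $\overline{W}Q$ is a one vertex aspherical Kan complex, that $\overline{W}$ is functorial, and that the trivial subgroup is admissible in Theorem \ref{thm42} --- are standard; finiteness of $G$ is in fact not needed for the construction.
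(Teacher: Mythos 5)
Your proposal is correct, but it takes a genuinely different route from the paper. You realize the prescribed abstract kernel on an abstract classifying complex: you take $Y=\overline{W}(\pi/H)$, let $G$ act through $\varphi$ by functoriality of $\overline{W}$, note that the induced action on $\pi_1(Y)\cong\pi/H$ is again $\varphi$, and then apply Theorem \ref{thm42} with the trivial (vacuously $G$-invariant) normal subgroup, so that the construction returns exactly the split extension $G\rtimes_\varphi(\pi/H)=L$ acting on the universal cover and covering $(G,Y)$. The paper instead never leaves the original complex: it forms the co-induced complex $Y=\Maps(G,\widetilde{K}_H)$ (the $|G|$-fold product of the regular cover with itself, which is where finiteness of $G$ enters), equips it with the twisted $\pi/H$-action $\Sigma(\chi)(z)=\psi(\varphi(z)(\overline{\sigma}))(\chi(z))$ and the shift action $J_x(\chi)(z)=\chi(xz)$ of $G$, welds these into an $L$-action via the extension criterion of Lemma \ref{lem41}, and takes $Y/(\pi/H)$ with its induced $G$-action as the realizing complex. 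Your argument is shorter, needs no finiteness of $G$, and produces a minimal aspherical complex whose fundamental group is literally $\pi/H$; its cost is that the realizing complex forgets $K$ entirely, whereas the paper's Conner--Raymond-style construction keeps the realizing complex assembled from $\widetilde{K}_H$, hence tied to the geometry of the given $K$ (at the price of a fundamental group that is only an extension with quotient $\pi/H$). The only points you should nail down explicitly are the standard facts you invoke: that $\overline{W}(\pi/H)$ is a one-vertex (minimal, aspherical) Kan complex with a natural isomorphism $\pi_1(\overline{W}(\pi/H),\phi_Y)\cong\pi/H$, naturality giving that $\overline{W}(\varphi(g))$ induces $\varphi(g)$ on $\pi_1$; these are available from \cite{May}, so the gap is only expository.
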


\begin{proof}
Since $G$ is finite, define $Y$ to be the set of all functions $\chi \colon G \rightarrow \widetilde{K}_H,$ written as the iterated Cartesian product with itself. Then $Y$ is also a Kan complex with one vertex.
Using the homomorphism $\varphi \colon G \rightarrow \Aut( \pi  /H) $, we can define an action of $\pi /H$ on $Y$ as follows:

To each $\overline{\sigma} \in \pi /H$ associate a morphism $\Sigma\colon Y\rightarrow Y$, defined by :

\begin{displaymath}
 \Sigma (\chi) (z) := \psi (\varphi (z) ( \overline{\sigma}) ) (\chi (z)), ~~\chi\in Y,~~z\in G.
\end{displaymath}

\noindent
Then the homomorphism $\theta\colon \pi /H \rightarrow \Hom(Y)$, which takes $ \overline{\sigma}\mapsto \Sigma$ defines a left action of $\pi /H$ on $Y$. This action is free since the action of $\pi/H$ on $\widetilde{K}_{H}$ is free.
We now define a homomorphism $J \colon G \rightarrow \Hom(Y)$ by associating to each $x \in G$ a simplicial morphism $J_x \colon Y \rightarrow Y$ given by $J_x (\chi) (z)$ := $\chi (xz)$. Since
$$(J_x \circ J_y ) (\chi ) (z)=J_x (\chi) (yz)=\chi (xyz)=J_{xy} (\chi ) (z),$$
\noindent
the map $J$ is a homomorphism. Further we have,

 $$ \begin{array}{lll} (J_x \circ \Sigma ) (\chi ) (z)
&=& J_x (\psi (\varphi (z)(\overline{\sigma}))(\chi (z))) \\
&=& \psi (\varphi (x) \varphi (z)(\overline{\sigma})) J_x (\chi) (z)\\
&=& \psi (\varphi (x) \varphi (z)(\overline{\sigma}))  (\chi) (xz).
\end{array}$$

\noindent
Therefore by Lemma \ref{lem41}, the action $\theta$ of $\pi /H$ on $Y$ extends to an action of $L$ on $Y$. Hence we have an action of $L/(\pi/H)=G$ on $Y/(\pi/H)$. This action of $G$ on $Y/(\pi/H)$ is such that applying the previous construction on this gives the action of $L$ on $Y$.
\end{proof}

\section{Action of a finite group on aspherical Kan complex}\label{s5}

In this section we will study aspherical Kan complexes and discuss the action of a finite group on an aspherical, finite, one vertex Kan complex.
Recall that a Kan complex is said to be finite if all simplices above a certain dimension are degenerate and an aspherical one vertex Kan complex is an Eilenberg-Maclane complex.

In the previous section, we have seen that if $G$ is a finite group acting on a one vertex Kan complex $(K,\phi)$,
with fundamental group $\pi$, then we have a homomorphism $\varphi \colon G \rightarrow \Aut(\pi) $. This homomorphism is called the
associated abstract kernel and gives rise to a group extension:

\begin{displaymath}
 1 \longrightarrow \pi  \longrightarrow L \longrightarrow G \longrightarrow 1.
\end{displaymath}

In other words, we have shown that every abstract kernel can be algebraically realized. We now consider the specific case where $G$ is a $p-$group and $K$ is a finite, aspherical one vertex Kan complex. 
We denote the fixed point set of $G$-action by $K^G$.

\begin{thm}\label{thm52}
 Let $G$ be a finite $p$-group, $p$ prime, acting on aspherical one vertex Kan complex $K$ such that $H^{*} (K;\mathbb{Z}_{p})$ is
finite dimensional as a vector space over $\mathbb{Z}_{p}$. Let $K^{G}$ denote the fixed point set of this action.
Then we have the following:
\begin{enumerate}[(i)]
\item $K^G$ itself is a one vertex Kan complex, with only one $0$-simplex $\phi$.
\item $H^{*}(K^G; \mathbb{Z}_{p}) \cong H^{*}(\Gamma ; \mathbb{Z}_{p})$, where $\Gamma = \{ \alpha \in \pi | g_{*}(\alpha) = \alpha, \forall g \in G \}.$
\item $\Im(\pi_{1}(K^G, \phi) \longrightarrow \pi_{1}(K, \phi)) = \Gamma$.
\end{enumerate}

\end{thm}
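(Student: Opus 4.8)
The plan is to reduce everything to the universal cover using Section~\ref{s3}, and then feed in classical Smith theory for $p$-groups. First I would take $H=\{e\}$, the trivial subgroup, so that $\pi/H=\pi$; this $H$ is automatically normal and $G$-invariant, so by Theorem~\ref{thm32} the $G$-action lifts to the universal cover $\widetilde K:=\widetilde K_{\{e\}}$, which is contractible because $K$ is aspherical. Let $E=\widetilde K^{G}$ denote the fixed-point complex of the lifted action; it contains $\widetilde\phi$, hence is nonempty. The subgroup $\Gamma\le\pi$ acts on $\widetilde K$ freely by deck transformations (the action $\psi$ of~(\ref{eq})), preserves $E$, and by Proposition~\ref{prop34} together with the remark following it we have $E/\Gamma=K^{G}$; moreover the quotient map $q\colon E\to K^{G}$ is a covering map, being the quotient of a simplicial set by a free action.

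For part~(i), $(K^{G})_{0}\subseteq K_{0}=\{\phi\}$ and $\phi$ is $G$-fixed, so $K^{G}$ has a single $0$-simplex. The one point in~(i) requiring a genuine argument is that $K^{G}$ is Kan, i.e., satisfies the horn-filling condition; this I would establish by an equivariant horn-filling argument, filling a given horn inside $K$ and then correcting the filler to a $G$-fixed one, using that $K$ is one-vertex and aspherical so that the fixed edges occurring in the horn represent elements of $\Gamma$ and the obstructions to choosing the higher-dimensional filler $G$-equivariantly are controlled by the vanishing of $\pi_{n}(K)$ for $n\ge 2$ (equivalently, one can transport the problem along $q$ to $E$ and use that $\widetilde K$ is contractible).

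Next I bring in Smith theory. Passing to geometric realizations, $|\widetilde K|$ is a contractible CW complex acted on by the finite $p$-group $G$; the hypothesis that $H^{*}(K;\mathbb{Z}_{p})$ is finite-dimensional supplies the finiteness under which Smith theory is available (and is automatic when $K$ is a finite Kan complex), so $|E|=|\widetilde K|^{G}$ is $\mathbb{Z}_{p}$-acyclic. In particular $E$ is connected and $\widetilde H^{*}(E;\mathbb{Z}_{p})=0$. Since $\Gamma$ acts freely on $E$ with quotient $K^{G}$, the Cartan--Leray spectral sequence of the covering $q$ has the form $H^{p}(\Gamma;H^{q}(E;\mathbb{Z}_{p}))\Rightarrow H^{p+q}(K^{G};\mathbb{Z}_{p})$; as $H^{q}(E;\mathbb{Z}_{p})$ vanishes for $q>0$ and equals $\mathbb{Z}_{p}$ as a trivial $\Gamma$-module for $q=0$, the sequence collapses and yields $H^{*}(K^{G};\mathbb{Z}_{p})\cong H^{*}(\Gamma;\mathbb{Z}_{p})$, proving~(ii). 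For~(iii), $E$ is connected and embeds in the simply connected $\widetilde K$ compatibly with the projections to $K$, so the composite $\pi_{1}(E)\to\pi_{1}(K^{G})\to\pi_{1}(K)$ factors through $\pi_{1}(\widetilde K)=1$; hence $\pi_{1}(K^{G})\to\pi_{1}(K)$ annihilates $\Im(\pi_{1}(E)\to\pi_{1}(K^{G}))$ and factors through the deck group $\pi_{1}(K^{G})/\Im(\pi_{1}(E)\to\pi_{1}(K^{G}))\cong\Gamma$ of $q$; the resulting map $\Gamma\to\pi$ is the inclusion, because a loop in $K^{G}$ lifts to a path in $E$ from $\widetilde\phi$ to $\widetilde\phi\cdot\alpha$ with $\alpha\in\Gamma$, this path is at the same time its lift to $\widetilde K$, so its class in $\pi_{1}(K)=\pi$ is $\alpha$, and every $\alpha\in\Gamma$ occurs since $E$ is connected. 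Hence $\Im(\pi_{1}(K^{G},\phi)\to\pi_{1}(K,\phi))=\Gamma$.

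The step I expect to be the main obstacle is the Smith-theory input: one must ensure the generally infinite complex $|\widetilde K|$ meets the finiteness (finite-dimensionality or finitisticity) conditions under which Smith theory applies, which is exactly where the hypothesis that $H^{*}(K;\mathbb{Z}_{p})$ is finite-dimensional---or, in the finite setting, finiteness of $K$---enters. A secondary, more technical point is the verification of the Kan extension property for $K^{G}$ in part~(i).
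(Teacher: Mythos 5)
Your argument is essentially the paper's proof: lift the action to the universal cover via Theorem~\ref{thm32}, apply Smith theory to conclude that $E=\widetilde K^{G}$ is $\mathbb{Z}_{p}$-acyclic, use $E/\Gamma=K^{G}$ (Proposition~\ref{prop34} and the remark following it) together with the Cartan--Leray spectral sequence for (ii), and identify the image of $\pi_{1}(K^{G},\phi)$ with $\Gamma$ from the explicit form of the lifted action for (iii). The only divergence is that you source the Smith-theoretic input from classical Smith theory after geometric realization, whereas the paper invokes a simplicial Smith theorem (Theorem~\ref{smith}) said to follow from simplicial Bredon cohomology \`a la May; the finiteness caveat you flag for $|\widetilde K|$ applies equally to the paper's own application of Theorem~\ref{smith} to the (generally non-finite) complex $\widetilde K$, and your treatment of (i) and (iii) is, if anything, more detailed than the paper's, which simply declares (i) clear.
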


\begin{proof}
To prove the above theorem, we use the following version of Smith Theorem for Kan complexes, which may be proved using simplicial Bredon cohomology, following the method for $G$-CW complexes as described in \cite{May2}.

\noindent
\begin{thm}[\cite{Sm}] \label {smith}
 Let $G$ be a finite $p$-group, $p$ prime, acting on a finite one vertex Kan complex $K$ such that $H^{*} (K;\mathbb{Z}_{p})$ is
finite dimensional as a vector space over $\mathbb{Z}_{p}$. Further, let $H^{i}(K;\mathbb{Z}_{p})= 0$ for $i>0$, and $H^{0}(K;\mathbb{Z}_{p}) \cong \mathbb{Z}_{p}$.
Then, $H^{i}(K^G;\mathbb{Z}_{p})= 0$ for $i>0$, and $H^{0}(K^G;\mathbb{Z}_{p}) \cong \mathbb{Z}_{p}$.
\end{thm}

Since the action of $G$ on $K$ is simplicial, $(i)$ is clear. Let $\widetilde{K}$ denote the universal covering complex of $K$ and let $\tilde{\phi} = (\phi, e)$, where $e$ denotes the identity of $\pi$.
Then $\widetilde{K}$ is a contractible one vertex Kan complex and hence acyclic mod $\mathbb{Z}_{p}$.

\noindent
As shown in the earlier section, the action $(G,K,\phi)$ can be lifted to an action
$(G, \widetilde{K}, \tilde{\phi})$. Let $E$ := $\widetilde{K}^{G}$ denote the fixed point set of this action.
Then, by Smith Theorem \ref{smith}, $E$ is acyclic mod $\mathbb{Z}_{p}$.

\noindent
Now, we know $E/ \Gamma = K^G \subset K$. Furthermore, since the action of $G$ on $\widetilde{K}$ is given by $g \cdot (k, \alpha) = (g\cdot k, g_{*}(\alpha))$,
where $g \in G$, $ k \in K$ and $\alpha \in \pi$, we have $\Im(\pi_{1}(K^G, \phi) \longrightarrow \pi_{1}(K, \phi)) = \Gamma$.

Applying the Cartan-Leray spectral sequence \cite{Cartan} to $(E, \Gamma)$, we get a spectral sequence with $E_{2}$-term
\begin{displaymath}
 E_{2}^{p,q} = H^{p}(\Gamma; H^{q}(\mathbb{Z}_{p})),
\end{displaymath}
which converges to $H^{*}(K^G; \mathbb{Z}_{p})$. Therefore, by $\mathbb{Z}_{p}$-acyclicity, we have,
\begin{displaymath}
 H^{*}(K^G; \mathbb{Z}_{p}) \cong H^{*}(\Gamma; \mathbb{Z}_{p}).
\end{displaymath}

This proves the theorem.
\end{proof}

Notice that when $K$ is also minimal, we have the following corollary.
\begin{corollary} \label{cor54}
Let $G$ be a finite $p$-group action on a finite, one vertex, $K(\pi,1)-$complex $(K, \phi)$ with $H^*(K;\mathbb{Z}_p)$ 
finite dimensional as vector space over $\mathbb{Z}_{p}$. If the induced action of $G$ on $\pi_{1}(K, \phi)$ is trivial, 
then $G$ acts trivially on $K$.
\end{corollary}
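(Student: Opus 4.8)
The plan is to reduce the statement to the rigidity of the standard minimal model of a $K(\pi,1)$-complex. Since $(K,\phi)$ is a $K(\pi,1)$-complex, it is a \emph{minimal} Eilenberg--Mac~Lane complex of type $(\pi,1)$, and, as recalled in Section~\ref{s1}, any two such are isomorphic. I would therefore fix an isomorphism $\theta\colon K\xrightarrow{\ \cong\ }\overline{W}\pi$ of simplicial sets, where $\overline{W}\pi$ is the nerve of $\pi$ (the standard minimal model of a $K(\pi,1)$-complex); since both complexes have exactly one $0$-simplex, $\theta$ is automatically basepoint preserving, and $\theta_{*}\colon\pi_{1}(K,\phi)\to\pi_{1}(\overline{W}\pi)=\pi$ is an isomorphism. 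Transporting the given simplicial $G$-action along $\theta$, it becomes a simplicial $G$-action on $\overline{W}\pi$ fixing the basepoint, that is, a homomorphism $\rho\colon G\to\Aut_{*}(\overline{W}\pi)$ into the group of basepoint preserving simplicial automorphisms of $\overline{W}\pi$.

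The heart of the matter is the identification $\Aut_{*}(\overline{W}\pi)\cong\Aut(\pi)$. I would prove that every basepoint preserving simplicial endomorphism $f$ of $\overline{W}\pi$ has the form $\overline{W}(\alpha)$ for a unique group endomorphism $\alpha$ of $\pi$. Writing an $n$-simplex of $\overline{W}\pi$ as a tuple $(g_{1},\dots,g_{n})\in\pi^{n}$, with faces $\partial_{0}(g_{1},\dots,g_{n})=(g_{2},\dots,g_{n})$, $\partial_{i}(g_{1},\dots,g_{n})=(g_{1},\dots,g_{i}g_{i+1},\dots,g_{n})$ for $0<i<n$, and $\partial_{n}(g_{1},\dots,g_{n})=(g_{1},\dots,g_{n-1})$, commutativity of $f$ with $\partial_{0}$ and $\partial_{n}$ on $2$-simplices forces $f_{2}(g,h)=(f_{1}(g),f_{1}(h))$, and then commutativity with $\partial_{1}$ forces $f_{1}(gh)=f_{1}(g)f_{1}(h)$; so $\alpha:=f_{1}$ is a group endomorphism, and an immediate induction yields $f_{n}=\alpha^{\times n}$ in every degree, i.e.\ $f=\overline{W}(\alpha)$. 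In particular $f$ is an automorphism exactly when $\alpha$ is, and $\overline{W}(\alpha)$ induces $\alpha$ on $\pi_{1}(\overline{W}\pi)=\pi$. Consequently, for $g\in G$, the automorphism of $\pi$ corresponding to $\rho(g)$ under this identification is $\theta_{*}\circ\varphi(g)\circ\theta_{*}^{-1}$, where $\varphi\colon G\to\Aut(\pi)$ is the abstract kernel of the action.

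Finally, the hypothesis that $G$ acts trivially on $\pi_{1}(K,\phi)$ says precisely that $\varphi$ is the trivial homomorphism, whence $\theta_{*}\circ\varphi(g)\circ\theta_{*}^{-1}=\mathrm{id}_{\pi}$ for all $g\in G$; by the second paragraph $\rho(g)=\overline{W}(\mathrm{id}_{\pi})=\mathrm{id}_{\overline{W}\pi}$, and transporting back along $\theta$ shows that $G$ acts trivially on $K$, as claimed (so in particular $K^{G}=K$). It is worth remarking how this fits with Theorem~\ref{thm52}: triviality of the action on $\pi$ gives $\Gamma=\pi$, so parts (ii) and (iii) of that theorem already provide an isomorphism $H^{*}(K^{G};\mathbb{Z}_{p})\cong H^{*}(K;\mathbb{Z}_{p})$ and a surjection $\pi_{1}(K^{G},\phi)\twoheadrightarrow\pi$; it is the rigidity of the minimal model that promotes this to the genuine equality $K^{G}=K$, and, incidentally, the finiteness and $p$-group hypotheses carried over from Theorem~\ref{thm52} are not actually used in the argument above. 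The one genuinely delicate step is the full-faithfulness computation for $\overline{W}$ in the second paragraph; it is elementary, but the bookkeeping with the simplicial identities has to be done carefully, and it carries the real content of the corollary.
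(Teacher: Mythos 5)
Your proof is correct, but it takes a genuinely different route from the paper's. The paper deduces the corollary directly from Theorem~\ref{thm52}: triviality of the action on $\pi$ gives $\Gamma=\pi$, so part (ii) (which rests on the simplicial Smith theorem and the Cartan--Leray spectral sequence) yields $H^{*}(K^{G};\mathbb{Z}_{p})\cong H^{*}(K;\mathbb{Z}_{p})$, and minimality of $K$ is then invoked to upgrade this cohomology isomorphism for the subcomplex $K^{G}\subset K$ to the equality $K^{G}=K$; this is where the finiteness, $p$-group and finite-dimensionality hypotheses enter. You instead prove a combinatorial rigidity statement for the minimal model: $K\cong\overline{W}\pi$ by the uniqueness of minimal Eilenberg-Mac~Lane complexes recalled in Section~\ref{s1}, and every simplicial self-map of the nerve is induced by an endomorphism of $\pi$ --- your degree-$2$ computation, followed by the induction using the outer faces $\partial_{0}$ and $\partial_{n}$, is exactly the standard argument that a simplex of the nerve is determined by its spine of edges, and it is correct; consequently an action inducing the identity on $\pi_{1}$ is literally the identity, and transporting back along the isomorphism gives $K^{G}=K$. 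What each approach buys: yours is elementary, bypasses Smith theory entirely, and, as you note, proves a stronger statement in which $G$ need not be finite or a $p$-group and no finiteness of $K$ or finite-dimensionality of $H^{*}(K;\mathbb{Z}_{p})$ is used (only that $K$ is a one-vertex minimal $K(\pi,1)$ with a simplicial action); the paper's route obtains the corollary as a formal specialization of Theorem~\ref{thm52} and of the identification $K^{G}=E/\Gamma$, the same pattern it reuses for Corollary~\ref{borel}. Your argument also has the side benefit of making explicit the rigidity that the paper's terse final step (``cohomology isomorphism plus minimality implies $K^{G}=K$'') implicitly relies on.
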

\begin{proof} Under the hypothesis $\Gamma = \pi_{1}(K, \phi)$, hence $H^{*}(K^G; \mathbb{Z}_{p}) \cong H^{*}(K; \mathbb{Z}_{p})$.
Also, $K^G \subset K$. Therefore, since $K$ is a minimal Eilenberg-Maclane complex, we have $K^G = K$.
\end{proof}

As a corollary we get a version  of Borel's Theorem \cite[Theorem 3.2]{CR2} in the present context.
\begin{corollary} \label{borel}
Let $(K,\phi)$ be a finite, one vertex,  $K(\pi,1)-$complex, with finite dimensional $\mathbb{Z}_{p}$-cohomology, for all primes $p$.
Let $G$ be a finite group acting effectively on $K$. Then the associated abstract kernel, $\varphi \colon G \rightarrow \Aut(\pi)$, is a monomorphism.
\end{corollary}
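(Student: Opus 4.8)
The plan is to reduce the statement to the case of a cyclic group of prime order, where Corollary~\ref{cor54} applies directly. Set $N := \ker\varphi \trianglelefteq G$; the goal is to prove $N = \{e\}$. Suppose, for contradiction, that $N$ is nontrivial. Then by Cauchy's theorem $N$ contains an element $h$ of prime order $p$, and $P := \langle h\rangle \cong \mathbb{Z}_p$ is a finite $p$-group which acts on $K$ by restriction of the given $G$-action.

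Next I would verify that $(K,\phi)$ together with this $P$-action satisfies the hypotheses of Corollary~\ref{cor54}. The properties of being finite, one vertex, and a $K(\pi,1)$-complex are intrinsic to $K$ and do not depend on the acting group, so they persist; in particular recall that "$K(\pi,1)$-complex" already includes minimality, which is what Corollary~\ref{cor54} uses. The cohomological finiteness hypothesis on $K$ is assumed for all primes, hence in particular for this $p$, so $H^{*}(K;\mathbb{Z}_p)$ is finite dimensional over $\mathbb{Z}_p$. Finally, since $P \subseteq \ker\varphi$, the induced homomorphism $P \to \Aut(\pi)$ is trivial, i.e., $P$ acts trivially on $\pi_{1}(K,\phi)$.

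Corollary~\ref{cor54} then yields that $P$ acts trivially on $K$; in particular $h$ acts as the identity on $K$. Because the original $G$-action is effective, this forces $h = e$, contradicting that $h$ has order $p$. Hence $N = \ker\varphi = \{e\}$, so $\varphi \colon G \to \Aut(\pi)$ is a monomorphism. I do not expect a genuine obstacle: all of the real content sits in Theorem~\ref{thm52} and Corollary~\ref{cor54}, and the argument above is a routine reduction. The only things that need a moment's care are checking that the hypotheses of Corollary~\ref{cor54} are inherited by the subgroup $P$ (in particular applying the $\mathbb{Z}_p$-cohomology finiteness at the correct prime) and using effectiveness of the $G$-action at the very last step to turn "acts trivially on $K$" into "is the identity element of $G$".
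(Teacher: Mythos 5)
Your proof is correct and follows essentially the same route as the paper: produce a nontrivial $p$-subgroup inside $\ker\varphi$, apply Corollary~\ref{cor54} to conclude it acts trivially on $K$, and contradict effectiveness. The only cosmetic difference is that you extract a cyclic subgroup of prime order via Cauchy's theorem, whereas the paper takes a nontrivial Sylow $p$-subgroup of the kernel; both serve identically as input to Corollary~\ref{cor54}.
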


\begin{proof}
 Suppose this is false. Then, for some prime $p$, $\ker{\varphi}$ contains a non-trivial Sylow $p$-subgroup, say, $L$. Then, applying Corollary \ref{cor54}, we see that the action of $L$ on $K$ is trivial, which contradicts the effectiveness of the action of $G$.
\end{proof}

\providecommand{\bysame}{\leavevmode\hbox to3em{\hrulefill}\thinspace}
\providecommand{\MR}{\relax\ifhmode\unskip\space\fi MR }
\providecommand{\MRhref}[2]{%
  \href{http://www.ams.org/mathscinet-getitem?mr=#1}{#2}
}
\providecommand{\href}[2]{#2}

\end{document}